\def\R{\mathbb{R}}
\def\N{\mathbb{N}}
\def\Z{\mathbb{Z}}
\def\grad{\operatorname{grad}}
\def\sgn{\operatorname{sgn}}
\def\rank{\operatorname{rank}}
\def\codim{\operatorname{codim}}
\newcommand{\spower}[2]{\sgn\left(#1\right) \left|#1\right|^{#2-1}}
\newcommand{\sn}{\operatorname{sn}}
\newcommand{\SN}{\operatorname{SN}}
\title[]{The eigenvalues and eigenfunctions of the non-linear equation associated to second order Sobolev embeddings}
\author{Lyonell Boulton} 
\address{Department of Mathematics and Maxwell Institute for Mathematical Sciences, Heriot-Watt University, Edinburgh, EH14 4AS, UK.}
\email{L.Boulton@hw.ac.uk}
\author{Jan Lang}
\address{Department of Mathematics, The Ohio State University, 231 West 18th Avenue, Columbus, OH 43210-1174, USA.}
\email{Lang@math.osu.edu}
\date{7th May 2023}
\newtheorem{Lemma}{Lemma}[section]
\newtheorem{Corollary}{Corollary}[section]
\newtheorem{Theorem}{Theorem}[section]
\newtheorem{Remark}{Remark}[section]
\newtheorem{Definition}{Definition}[section]
\begin{document}

\begin{abstract}
We consider the non-linear eigenvalue equations characterizing $L^p$ into $L^q$ Sobolev embeddings of second order for Navier boundary conditions at both ends of a line segment. We give a complete description of the s-numbers and the extremal functions in the general case $(p,q)\in(1,\infty)^2$. Among other results, we show that these can be expressed in terms of those of related first order embeddings, if and only if $\frac{1}{p}+\frac{1}{q}=1$.   Our findings shed new light on the surprising nature of higher order Sobolev spaces in the Banach space setting. 

\vspace{2cm}

\tableofcontents

\end{abstract}

\maketitle

\section{Introduction} \label{section1}

Let us begin by recalling the classical theory of first order Sobolev embeddings in 1D.  For $t_0>0$, let the line segment $\mathcal{I}=[0,t_0]$. For $1<p<\infty$, let $W^{1,p}_{0}\equiv W^{1,p}_{0}(\mathcal{I})$ be the closure of $C^{\infty}_0(\operatorname{Int}\mathcal{I})$ in the real Sobolev space $W^{1,p}\equiv W^{1,p}(\mathcal{I})$ with respect to the norm
\[\|u\|_{W^{1,p}}= \|u\|_{L^p}+\|u'\|_{L^p}.   \] 
The seminorm $\|u\|_{W^{1,p}_{0}}=\|u'\|_{L^p}$ is also a norm equivalent to $\|u\|_{W^{1,p}}$ in $W^{1,p}_{0}$ and from now on we endow this space with it. Both $W^{1,p}$ and $W^{1,p}_0$ are embedded into $L^q\equiv L^q(\mathcal{I})$ for all $1<q<\infty$. Consider the identity maps
\[E_1: W^{1,p}_{0} \hookrightarrow L^q ,\]
which are compact operators. As the underlying spaces are reflexive and strictly convex, then there exist non-zero functions $u_{\mathrm{D}}\in W^{1,p}_{0}$ realizing the operator norm of $E_1$. That is, such that
\[\|E_1\|=\sup_{u\in W^{1,p}_{0}} {\|u\|_{L^q} \over  \|u'\|_{L^p}} = {\|u_{\mathrm{D}}\|_{L^q} \over  \|u'_{\mathrm{D}}\|_{L^p}}>0.
\]
These optimal functions can be fully described in terms of an eigenvalue equation which derives from a duality map formulation \cite[Sect.~3.2]{EL-book1}, as follows. 

We seek for $u_{\mathrm{D}}$, such that $\|u_{\mathrm{D}}\|_{L^q}=\|E_1\|$ and $\|u_{\mathrm{D}}\|_{W^{1,p}_{0}}=1$. See \cite[Propositions~1.10 and 1.11]{EL-book1}. Formally, this corresponds to a solution of the ``infinite-dimensional Lagrange multipliers'' equation
\begin{equation}  \label{prop11.1}
    \lambda \left\langle v, \grad \|u\|_{L^q} \right\rangle_{L^{q'}}=
 \left\langle v, \grad \|u\|_{W^{1,p}_{0}} \right\rangle_{L^{p'}}
\end{equation}
for all $v\in C^{\infty}_0(\operatorname{Int}\mathcal{I})$,
where the eigenvalue $\lambda=\lambda_1=\|E_1\|^{-q}$ is minimal. 
We write $p'=\frac{p}{p-1}$, so that the usual identity $\frac{1}{p}+\frac{1}{p'}=1$ holds.
In the sense of distributions, the G\^{a}teux derivatives of the norms are
\[ 
\grad \|u\|_{L^q}=\|u\|_{L^q}^{1-q} \spower{u(t)}{q} \qquad \text{for }u\in L^q\setminus\{0\}
\] 
and 
\[
   \grad \|u\|_{W_0^{1,p}}=-\|u'\|_{L^p}^{1-p} \left(\spower{u'(t)}{p}\right)' \qquad \text{for } u\in W^{1,p}_{0}\setminus\{0\},
\]
respectively.
Then, \eqref{prop11.1} with $u_{\mathrm{D}}$ normalized as above and the representation of $\lambda_1$, yield
\[
    \lambda_1\!\!\int_0^{t_0} \!\!\!\!\! v(t) \spower{u_{\mathrm{D}}(t)}{q} \!\mathrm{d}t\!=\!\!\!
     \int_0^{t_0}\!\!\!\!\! v'(t) \spower{u_{\mathrm{D}}'(t)}{p}\!\mathrm{d}t,
\]
for all $v\in C^{\infty}_0(\operatorname{Int}\mathcal{I})$.

By following the arguments described in \cite[Sect.~1.3]{EL-book1} (or in \cite{EEH01} and \cite{EEH02}),  we then see that the latter identity renders the eigenvalue equation
\begin{equation} \label{classical_embedding}
\begin{aligned} 
-(\spower{u'}{p})'&=  \lambda \spower{u}{q} \\
u(0)=u(t_0)&=0,
\end{aligned} 
\end{equation}
of which $u_{\mathrm{D}}$ is an eigenfunction associated to the smallest eigenvalue $\lambda=\lambda_1>0$. The other extremal functions of the Sobolev embedding $E_1$ are related to the other eigenfunctions of \eqref{classical_embedding}. For $p\not=q$, this equation is not homogeneous, so extra ansatz should be imposed. We will elaborate fully on this, rigorously, in the context of second order Sobolev embeddings, in sections~\ref{secder} and \ref{section6}. 

The eigenpairs of \eqref{classical_embedding} have been studied significantly in the past. See Section~\ref{section5} or the list of references in \cite{DM,EL-book1}, for details on this. The eigenfunctions can be described explicitly in terms of generalized trigonometric functions related to inverse incomplete Beta functions. The norm of $E_1$ was first computed in \cite{Ta1} in terms of Beta functions. It is also known that the  different notions of s-numbers of $E_1$ coincide. Moreover, they scale as $\frac{1}{n}$ in their index $n$, \cite{EL2012}. 

From the above well settled theory, questions such as the following naturally arise. 
\begin{itemize}
\item Is there an analogue eigenvalue problem for Sobolev embeddings of higher order? 
\item If so, can we expect a full characterization of the extremal functions in terms of known special functions?
\item Once the equation is posed, what properties do the eigenpairs have, compared to those of \eqref{classical_embedding}? 
\item Is there any obvious relation between the first and the higher order optimizers?
\end{itemize} 
In the present paper we address these questions for the specific case of the second order Sobolev space \[W^{2,p}_{\mathrm{D}}\equiv W^{2,p}_{\mathrm{D}}(\mathcal{I})=\{u\in W^{2,p}(\mathcal{I})\,:\, u(0)=u(t_0)=0\} \]
 equipped with the norm
\[\|u\|_{W^{2,p}_{\mathrm{D}}}= \|u''\|_{L^p}
\] 
and the corresponding compact embedding 
\[E_2: W^{2,p}_{\mathrm{D}} \hookrightarrow L^q.\] 
In Section~\ref{secder} we derive the associated eigenvalue problem which serves as analogue to \eqref{classical_embedding}. See \eqref{original}. Formally, this corresponds to replacing first order differentiation with second order differentiation and an extra pair of boundary conditions, but this has to be justified rigorously. Periodic solutions of this equation will be the central object of study in sections~\ref{section2}--\ref{section4}, where we show existence and uniqueness in a suitable sense. One of the crucial steps for the proof of the latter relies on earlier fundamental work of Benedikt, \cite{Be2004}. To the best of our knowledge, only the homogeneous case $p=q$ seems to have been studied in some detail. Concretely, Dr{\'a}bek and {\^ O}tani reported in \cite{DO}, that the first eigenfunction exists, that it is symmetric with respect to $t_0/2$ and that the higher eigenfunctions are generated from this first eigenfunction by re-scaling. Below, we extend these and other related statements to all $(p,q)\in(1,\infty)^2$.

In Section~\ref{section6} we determine explicit bounds for the s-numbers of $E_2$, after examining the relation with the first order embedding $E_1$ in Section~\ref{section5}. We give close formulas for the s-numbers of $E_2$ whenever $q=p'$ and argue that this is the only case in which these are given in terms of simple expressions involving Beta functions.


\section{Derivation of the equation and main results} \label{secder}
We now consider the rigorous derivation of the non-linear eigenvalue equation for $E_2$ via duality maps. Basic terminology and background results can be found in \cite{EEH01,EEH02} and \cite[Chapter 2]{EE-book}.

Let $0\not = T:X \to Y$ be a general compact map, where $X,\, Y,\, X^*$ and $Y^*$, are real strictly convex Banach spaces. Let $\tilde{J}_X$ and $\tilde{J}_Y$ be given by $ \tilde{J}_X(x)= \grad \|x\|_X$ and  $ \tilde{J}_Y(y)= \grad \|y\|_Y$, where ``$\grad$'' is derivative in the G\^ateaux sense. Then, \cite[Proposition~1.10]{EL-book1}, there exists $x_1\in X$ such that $\|x_1\|_X=1$ and $\|T(x_1)\|_{Y}=\|T\|$. Moreover, \cite[Proposition 1.11]{EL-book1}, in the following diagram, 
	\begin{equation}
		\begin{tikzcd}[row sep=scriptsize]
			 X \dar["\tilde{J}_X"]\rar["T"] & Y \dar["\tilde{J}_Y"]\\
			 X^*  & \lar["T^*"] Y^* 
		\end{tikzcd}
	\end{equation}
$x=x_1$ satisfies the equation
\begin{equation}
T^*\tilde{J}_Y Tx= \nu \tilde{J}_X x \label{j-equation}
\end{equation} 
for $\nu=\|T\|$.
The case of first order Sobolev embeddings described above, corresponds to taking  $X=W^{1,p}_{0}$, $Y=L^q$, $T=E_1$ and \eqref{j-equation} unpacks into \eqref{prop11.1}.

For second order embeddings, we seek for non-zero extremal elements $u_\mathrm{D}\in W_{\mathrm{D}}^{2,p}$ such that
\[\|E_2\|=\sup_{u\in W_{\mathrm{D}}^{2,p}} {\|u\|_{L^q} \over  \|u''\|_{L^p}} = {\|u_{\mathrm{D}}\|_{L^q} \over  \|u''_{\mathrm{D}}\|_{L^p}}.
\]
In order to characterize $u_{\mathrm{D}}$, set $X=W^{2,p}_{\mathrm{D}}$, $Y=L^q$ and $T=E_2$. Both $X$ and $Y$ are reflexive and strictly convex. Also  $Y^*$, being $L^{q'}$, is strictly convex. Furthermore, $\|.\|_X$ is G\^ateaux-differentiable on $X\setminus \{0\}$, hence  $X^*$ is also strictly convex, \cite[Proposition 1.8]{EL-book1}. In the sense of distributions,
\[\tilde{J}_Y(u)= \|u\|_{L^q}^{1-q} \spower{u}{q}, \qquad u\in L^q\setminus \{0\},
\]   
and
\[\tilde{J}_X(u)= \|u''\|_{L^p}^{1-p}  \left(\spower{u''}{p}\right)'', \qquad u\in W^{2,p}_{\mathrm{D}}\setminus \{0\}.
\]
The equation \eqref{j-equation} takes the form
\begin{equation} \begin{aligned}
 \|u\|_{L^q}^{1-q}\!\! &\int_0^{t_0}\!\!\!\! v \spower{u}{q}\!\! \mathrm{d}x\! \\ &=\!\! \|E_2\|
\|u''\|_{L^p}^{1-p}\!\! \int_0^{t_0}\!\!\!\! v'' \spower{u''}{p} \!\!\mathrm{d}x,
\end{aligned}
\label{j-equation 2} \end{equation}  
 for all $v\in C^\infty_0(\operatorname{Int}\mathcal{I})$.
By compactness, there exists a (weak) solution $u=u_{\mathrm{D}}\in W_{\mathrm{D}}^{2,p}$, satisfying $\|u_{\mathrm{D}}''\|_{L^p}=1$ and $\|u_{\mathrm{D}}\|_{L^q}=\|E_2\|$. We now derive the non-linear eigenvalue equation representing all turning points of \eqref{j-equation 2}.

Firstly, recall that the Poisson equation with Dirichlet boundary conditions is uniquely solvable in $L^{q'}$. That is, for
 each $f\in L^{q'}$ there exists $g \in W^{2,q'}_{\mathrm{D}}$ such that $-{g}''=f$.
In particular, there exists $\tilde{u} \in W^{2,q'}_{\mathrm{D}}$ such that $-\tilde{u}''=\spower{u_{\mathrm{D}}}{q} \in L^{q'}$. In weak form, 
\begin{equation*}
\int_0^{t_0} v \spower{u_{\mathrm{D}}}{q} = -\langle v, \tilde{u}''\rangle_{L^{q'}} 
		= -\int_0^{t_0}v'' \tilde{u} 
\end{equation*}
for all $ v\in  C^\infty_0(\operatorname{Int}\mathcal{I})$. Substituting into \eqref{j-equation 2}, we then have that
\[ \int_0^{t_0}v''\left(\|E_2\|^q \spower{u_{\mathrm{D}}''}{p} +\tilde{u} \right)=0
\]
for all $v\in C^\infty_0(\operatorname{Int}\mathcal{I})$.
Hence 
\[\tilde{u}=-\|E_2\|^q \spower{u_{\mathrm{D}}''}{p}\in L^{p'}\cap W^{2,q'}_{\mathrm{D}}
\] 
and  moreover
 \[
 \|E_2\|^q \left(\spower{u_{\mathrm{D}}''}{p} \right)''=\spower{u_{\mathrm{D}}}{q}.\]
Thus, $u_{\mathrm{D}}$ is an eigenfunction of the differential equation
 \begin{equation*} \label{theoriginalODE} \nu \left(\spower{u''}{p} \right)''=\spower{u}{q}
 \end{equation*}
with boundary conditions
\begin{equation*}\label{theoriginalBC}u(0)=u(t_0)=u''(0)=u''(t_0)=0.\end{equation*}
Here, the first two boundary conditions are consequence of the inclusion
\[
u_{\mathrm{D}}\in W^{2,p}_{\mathrm{D}}
\]
and the last two are consequence of the inclusion
\[
     |u_{\mathrm{D}}''|^{p-1}\in W^{2,q'}_{\mathrm{D}}.
\]

We therefore obtain a characterization of the norm of the embedding $E_2$ in terms of an eigenvalue equation, which can be re-cast as,  \begin{equation} \label{original}
\begin{aligned}
&(\spower{u''}{p})''=\lambda \spower{u}{q} & \qquad \qquad 0\leq t \leq t_0 \\
&u(0)=u(t_0)=u''(0)=u''(t_0)=0,
\end{aligned}
\end{equation}
for eigenpairs $u\not=0$ and $\lambda>0$. One of our main goals below will be to prove existence and uniqueness (in a suitable sense) of the eigenfunctions. The next theorem, about those which are positive, is a  template for this and it represents the first main contribution of this paper. Subject to scaling, this positive eigenfunction will turn out to be the extremal functions $u_{\mathrm{D}}$ for $\|E_2\|$. The branching of the statement  is a consequence of the fact that the equation is homogeneous if and only if $p=q$.

\begin{Theorem} \label{existence}
Let $t_0>0$ be fixed. If $p\not=q$, then for all $\lambda>0$ there exists a unique eigenfunction of \eqref{original} positive on $(0,t_0)$. If $p=q$, then there exists a unique $\lambda>0$ (depending on $t_0$) such that an eigenfunction of \eqref{original} positive on $(0,t_0)$ exists and this eigenfunction is unique up to multiplication by a scalar.
\end{Theorem}

The characterization of the strict s-numbers of $E_2$ by means of \eqref{original} follows arguments similar to those given above. In Section~\ref{section6} we will determine bounds for these s-numbers through a suitable normalization and scaling of the eigenpairs. At this point we highlight that, despite of the clear formal connections between \eqref{original} and its first order counterpart \eqref{classical_embedding},  the periodic solutions of \eqref{original} cannot be expressed in simple terms by means of those of \eqref{classical_embedding} for general $(p,q)\in(1,\infty)^2$.  This point will be addressed in Section~\ref{section5}.


\section{System formulation and background properties} \label{section2}
If  $(u,\lambda)$ is an eigenpair of \eqref{original}, integrating by parts twice and applying the boundary conditions, gives
\begin{equation} \label{eigenvalue and ratio}
       \lambda= \frac{\|u''\|^p_{L^p}}{\|u\|^q_{L^q}}\geq 0.
\end{equation}
The boundary conditions prevent $u$ from being a linear function, other than $u= 0$. Hence, $\lambda>0$. Without further mention, from now on we will assume that this is the case.

Let
\begin{equation} \label{subs}
\begin{gathered}
     u_1(t)=u(t), \quad u_2(t)=u'(t),  \\
      w_1(t)=-\spower{u''(t)}{p} \quad \text{and} \quad
      w_2(t)=-(\spower{u''(t)}{p})'
\end{gathered}
 \end{equation}
 where $u(t)$ is a solution of \eqref{original}.
Then,
\begin{equation} \label{differential} \tag{$*_\mathrm{D}$}
\begin{aligned} 
u_1'(t)&=u_2(t) \\
w_1'(t)&=w_2(t)
\end{aligned} \qquad
\begin{aligned}
u_2'(t)&=-\spower{w_1(t)}{p'} \\
w_2'(t)&=-\lambda \spower{u_1(t)}{q}.
\end{aligned}
\end{equation}
Integrating each component, gives
\begin{equation} \label{integral} \tag{$*_\mathrm{I}$}
\begin{aligned}
u_1(t)&=\int_0^t u_2(s) \mathrm{d}s \\
w_1(t)&=\int_0^t w_2(s)\mathrm{d}s   
\end{aligned}
\qquad
\begin{aligned}
u_2(t)&=\alpha-\int_0^t \spower{w_1(s)}{p'}\mathrm{d}s \\
w_2(t)&=\beta-\lambda \int_0^t \spower{u_1(s)}{q} \mathrm{d}s
\end{aligned}
\end{equation}
for $\alpha=u_2(0)$ and $\beta=w_2(0)$. We can also write \eqref{differential} as
\begin{equation}   \label{vector} \tag{$*_\mathrm{V}$}
     \underline{\varphi}'(t)=F(\lambda;\underline{\varphi}(t))
\end{equation}
where
\[
     \underline{\varphi}(t)=\begin{bmatrix} u_1(t)\\ u_2(t)\\ w_1(t) \\ w_2(t) \end{bmatrix}
\qquad 
\text{and} \qquad
     F(\lambda;x,y,z,w)=\begin{bmatrix}
y \\ -\spower{z}{p'} \\ w \\ -\lambda \spower{x}{q} \end{bmatrix}.
\]
When referring to the systems $(*)$ below, we mean collectively the equivalent formulations \eqref{differential}, \eqref{integral} and \eqref{vector}, with suitable initial conditions for \eqref{differential} and \eqref{vector}, depending on the context.

The boundary conditions of \eqref{original} turn into
\[
    u_1(0)=w_1(0)=u_1(t_0)=w_1(t_0)=0.
\]
Regarding the eigenvalue $\lambda$ as a fixed parameter, we therefore seek for a $C^1$ solution $\underline{\varphi}(t)$ satisfying the initial condition,
\begin{equation} \label{initialstate}
  \underline{\varphi}(0)=\underline{\zeta}= \begin{bmatrix} 0 \\ \alpha \\ 0 \\ \beta \end{bmatrix}.
\end{equation}
By means of a change of variables to $t-2t_0$, it is readily seen that, if such a solution returns to the initial position at $t=2t_0$, i.e. $\underline{\varphi}(2t_0)=\underline{\zeta}$, then we can continue it into a $2t_0$-periodic global solution. Below, we will see that this periodic solution corresponds exactly to an eigenfunction, as it is symmetric with respect to half the period with
$
  \underline{\varphi}(t_0)= \pm \underline{\zeta}.
$
Its existence is supported on suitable combinations of non-zero $\alpha$ and $\beta$. We will also see that it does not bifurcate. Therefore, it is unique, modulo the re-scaling introduced next.

\begin{Remark} \label{Remark3}
Assume that $(\lambda,u)$ is a solution of \eqref{original} for $t_0>0$ and that $a,b>0$. Then, $(\tilde{\lambda},\tilde{u})$ 
where 
\begin{equation} \label{subst (1)}
     \tilde{u}(t)=au(bt), \quad \tilde{\lambda}=\lambda a^{p-q}b^{2p} \quad \text{and} \quad \tilde{t}_0=\frac{t_0}{b},
\end{equation}
is a solution of \eqref{original} for $0<t<\tilde{t}_0$.
In obvious notation
\begin{equation} \label{subst (2)}
     \underline{\tilde{\varphi}}(t)=\begin{bmatrix}
\tilde{u}_1(t) \\ \tilde{u}_2(t) \\ \tilde{w}_1(t) \\ \tilde{w}_2(t) \end{bmatrix} =
\begin{bmatrix}  a u_1(bt) \\ ab u_2(bt) \\ a^{p-1}b^{2p-2}w_1(bt) \\ a^{p-1} b^{2p-1} w_2(bt) \end{bmatrix} 
\end{equation}
is the corresponding solution of $(*)$ so that 
\begin{equation} \label{subst (3)}
     \underline{\tilde{\varphi}}(0)=\begin{bmatrix}
0 \\ ab \alpha \\ 0 \\ a^{p-1} b^{2p-1} \beta \end{bmatrix}=
\begin{bmatrix} 0 \\ \tilde{\alpha} \\ 0 \\ \tilde{\beta} \end{bmatrix}.
\end{equation}
\end{Remark}

We close this section by addressing local and global uniqueness of the solution. The function $F(\lambda;\cdot):\mathbb{R}^4\longrightarrow \mathbb{R}^4$ is continuous. Therefore, by the Cauchy-Peano Theorem, for any $\underline{\psi}\in \mathbb{R}^4$ and $t_1\geq 0$, there exists $\delta>0$ such that \eqref{vector} has a solution $\underline{\varphi}\in C^1([t_1,t_1+\delta])^4$ and $\underline{\varphi}(t_1)=\underline{\psi}$. 

The function $F(\lambda;\cdot)$ is Lipschitz for all $x\not=0$ and $z\not=0$, therefore the solution will be locally unique by the classical Cauchy-Lipschitz Uniqueness Theorem, for $u_1(t_1)\not=0$ and $w_1(t_1)\not=0$. If $p>2$ or $q<2$,  $F(\lambda;\cdot)$ is not Lipschitz at $x=z=0$. However, remarkably, the solution turns out to be locally unique also, if and only if $\underline{\psi}\not=0$. This discovery dates back to the work of Benedikt. The next statement summarizing this fact follow directly from \cite[Proposition~3.1.5]{Be2004}. 

\begin{Lemma} \label{globaluniqueness}
Let $1<p,q<\infty$ and $\lambda>0$ be fixed. Let $t_1\geq 0$ and $\underline{\psi}\in \mathbb{R}^4\setminus\{0\}$. There exists $\delta>0$, such that \eqref{vector} has a unique solution $\underline{\varphi}\in C^1([t_1,t_1+\delta])^4$ satisfying $\underline{\varphi}(t_1)=\underline{\psi}$. 
\end{Lemma}

By standard continuation arguments, the above implies that there exist maximal $t_{\infty}\leq \infty$, dependants on $(\alpha,\beta)$, such that the solution $\underline{\varphi}\in C^1(0,t_{\infty})$ is unique, for any given $(\alpha,\beta)\not=(0,0)$. In order to show the uniqueness in Theorem~\ref{existence}, we will see below that, subject to the boundary conditions of \eqref{original}, $\underline{\varphi}$ is unique in the case $\underline{\psi}=0$ for any $t_1\in (0,t_0)$. See Lemma~\ref{leadingtouniqueness}.

The next result, about the expected property of continuity of the solutions with respect to $(\alpha,\beta)$, is a consequence of combining the uniqueness with classical statements such as \cite[Theorem~4.3]{CL1955}. The proof is straightforward. 

\begin{Lemma}  \label{continuity}
Let $(\alpha_1,\beta_1)\in (0,\infty)^2$. Let $\underline{\varphi}^1\in C^1(0,t_\infty)$ be the solution of \eqref{vector},  unique for $t_{\infty}\leq \infty$, such that $\underline{\varphi}^1(0)=[ 0,\, \alpha_1 ,\, 0 ,\, \beta_1 ]^T$. Let $t_1\in(0,t_{\infty})$. Then, there exists $\delta>0$ such that for any \[(\alpha,\beta)\in(\alpha_1-\delta,\alpha_1+\delta)\times (\beta_1-\delta,\beta_1+\delta),\] 
all solutions $\underline{\varphi}$ of \eqref{vector} satisfying
$\underline{\varphi}(0)=[ 0,\, \alpha ,\, 0 ,\, \beta ]^T$
exist over $[0,t_1]$.
Moreover, 
\[
     \lim_{(\alpha,\beta)\to (\alpha_1,\beta_1)}\underline{\varphi}(t)=\underline{\varphi}^1(t)
\]
 uniformly for all $t\in[0,t_1]$. 
\end{Lemma}

For some pairs $(p,q)$ and $(\alpha,\beta)$, concretely for $p\geq q$ and any $(\alpha,\beta)\in \mathbb{R}^2$, the solution of $(*)$ exists for all $t\in[0,\infty)$. However, for certain combinations of these parameters, the solution develops singularities at some $t<\infty$. The next two lemmas describe the oscillatory behaviour of these solutions, in both cases.

\begin{Lemma} \label{specificlongtime}
Let $\lambda,\,p,\,q,\,\alpha=u_2(0)$ and $\beta=w_2(0)$, be such that there exists a solution $\underline{\varphi}\in C^1(0,\infty)^4$ of $(*)$. If one of the components of $\underline{\varphi}$ is uniformly bounded in $t\in[0,\infty)$, then all the components of $\underline{\varphi}$ have infinitely many zeros. 
\end{Lemma}
\begin{proof}
Without loss of generality, suppose that it is $u_1(t)$ the one component that is uniformly bounded. That is,
\[
    \limsup_{t\to \infty} |u_1(t)|<\infty.
\] Then, note that 
\[\liminf_{t\to \infty} |u_1(t)|=0.\] Indeed, the fact that
\[\liminf_{t\to \infty} |u_1(t)|>0,\]
would imply that 
\[
    \lim_{t\to \infty}|w_2(t)|=\infty,
\]
which by following the connections of \eqref{integral} between the different components of $\underline{\varphi}(t)$, would lead to 
\[
    \lim_{t\to \infty}|u_1(t)|=\infty
\]
creating a contradiction. Moreover, in fact
\begin{equation} \label{onlyposlimit}
        \liminf_{t\to \infty} |u_k(t)|= 
        \liminf_{t\to \infty} |w_k(t)|=0 \qquad k=1,2,
\end{equation}
also, for otherwise we end up with the same contradiction.

Now we know that \eqref{onlyposlimit} holds true and two possibilities arise. One is that the modulus of one of the components of $\underline{\varphi}(t)$ has non-zero limsup at $t\to \infty$. That is,
\[
     \limsup_{t\to \infty} \big(|u_1(t)|+|u_2(t)|+|w_1(t)|+|w_2(t)|\big)>0.
\]
In this case, because of \eqref{onlyposlimit} and by following the connections between the different entries of $\underline{\varphi}(t)$ via \eqref{integral}, we gather that the next component will necessarily have infinitely many zeros. In turn all components will have infinitely many zeros, reaching the conclusion of the lemma.

The other possibility is that all components of $\underline{\varphi}(t)$ have zero limit as $t\to \infty$. That is,
\[
     \lim_{t\to \infty} \big(|u_1(t)|+|u_2(t)|+|w_1(t)|+|w_2(t)|\big)=0.
\]
Here the limit exists, because the limsups and liminfs of all the terms coincide and are all equal to zero. This being the case, if one entry of  $\underline{\varphi}(t)$ vanishes at time $t_1>0$, then there exists $t_2>t_1$ such that the next entry (counting cyclically) vanishes at $t_2$. Arguing recursively, this implies that all entries of $\underline{\varphi}(t)$ should have infinitely many zeros as claimed in the conclusion of the lemma. 
\end{proof}

We will show in Section~\ref{section4} a sharper result than this lemma in the context of the eigenvalue equation \eqref{original}. Namely, if $u_1(t_1)=w_1(t_1)=0$ for some $t_1>0$, then all components of the solution are uniformly bounded, periodic and symmetric. 

We now consider the behaviour of the solution near finite singular points.

\begin{Lemma} \label{behasol}
Let $\lambda,\,p,\,q,\,\underline{\zeta}$ be such that, there exists a  solution vector $\underline{\varphi}\in C^1(0,t_{\infty})^4$ of $(*)$ for some $0<t_{\infty}<\infty$. If
\begin{equation*} \label{forsome}
   \limsup_{t\to t_{\infty}^-} v(t)=\infty \qquad\text{for one of }v\in\{u_1,u_2,w_1,w_2\},
\end{equation*}
then
\begin{equation} \label{forall}
   \left|\liminf_{t\to t_{\infty}^-} v(t)\right|=\left|\limsup_{t\to t_{\infty}^-} v(t)\right|=\infty \qquad\text{for all }v\in\{u_1,u_2,w_1,w_2\}.
\end{equation}
\end{Lemma}
\begin{proof}
Without loss of generality we assume that, 
\begin{equation*}
   \limsup_{t\to t_{\infty}^-}u_{1}(t)=\infty.
\end{equation*}   
There are two possibilities to consider. 

One possibility is that
\[
     0\leq \liminf_{t\to t_{\infty}^-}u_1(t)\leq \infty.
\]
In that case, according of \eqref{integral}, near $t_\infty$ we should have $w_2(t)$ monotonic decreasing, hence $w_1(t)$ monotonic decreasing and $u_2(t)$ is monotonic increasing. Then, $u_1(t)$ is monotonic increasing and positive near $t_\infty$. Therefore, in fact
\[
    \lim_{t\to t_\infty^-} u_1(t)=\infty.
\]
But then, from the formulation \eqref{differential}, it follows that 
\[
      \lim_{t\to t_\infty^-}u_2(t)=\infty
\qquad \text{and also that} \qquad      \lim_{t\to t_\infty^-}w_k(t)=-\infty.
\]
This implies \eqref{forall}.

The other possibility is that
\[
    \liminf_{t\to t_\infty^-} u_1(t)<0.
\]
From the formulation \eqref{differential} it then follows that
\[
    \liminf_{t\to t_\infty^-} u_2(t)=-\infty \qquad \text{and} \qquad
     \limsup_{t\to t_\infty^-} u_2(t)=+\infty,
\] 
as $u_1(t)$ becomes highly oscillatory at $t_\infty<\infty$ with negative minima and positive maxima. Likewise, and for similar reasons, also
\[
    \liminf_{t\to t_\infty^-} w_k(t)=-\infty \qquad \text{and} \qquad
     \limsup_{t\to t_\infty^-} w_k(t)=+\infty.
\] 
Then, 
\[
    \liminf_{t\to t_\infty^-} u_1(t)=-\infty.
\] 
Therefore, once again, we have \eqref{forall}.
\end{proof}

\begin{Remark} \label{rem1}
When a solution exists in a segment $(0,t_\infty)$ and has a singularity at $t_\infty$, we have the following assertion. If one of the components of the solution vector does not have a zero in $t\in(t_1,t_\infty)$ for some $0<t_1<t_\infty$, then all the components of the solution vector are monotonic for $t\in (t_2,t_\infty)$ where $t_2\geq t_1$ is large enough. Moreover, in that case
\begin{equation} \label{theimportant}
     \lim_{t\to t_{\infty}^-}u_k(t)=-\lim_{t\to t_{\infty}^-}w_k(t).
\end{equation}
The proof of this is identical to that of the first possibility in the proof of Lemma~\ref{behasol}.
\end{Remark}

The solution does not become oscillatory near a singularity $t_\infty<\infty$ for a range of parameters $(p,q)$. Namely 
\[
    \lim_{t\to t_\infty^-}|u_k(t)|=\lim_{t\to t_\infty^-}|w_k(t)|=\infty
\]
and \eqref{theimportant} holds true. In fact we conjecture that the latter is the case for all $p>1$ and $q>1$, but we are not currently able to complete the proof of this claim. We will not need this fact below.


\section{Stability of solutions and proof of existence} \label{section3}
We now settle the existence part of Theorem~\ref{existence}. The first statement below, about the monotonicity of the solutions of $(*)$ in terms of the initial data, will be combined with Lemma~\ref{continuity} in order to construct solutions whose components develop zeros in $(0,\infty)$. These solutions will then be further perturbed and dilated, to match the boundary condition at $t=t_0$. 

\begin{Lemma} \label{lemaA}
Let $\lambda>0$ and $p,\,q>1$ be fixed. Consider the evolution systems $(*)$-\eqref{initialstate}. Let $0<\alpha_2\leq \alpha_1$, $0<\beta_1\leq \beta_2$ and assume that at least one of these inequalities is strict. Let $t_1>0$ be such that all 
the components of the solution\footnote{Here and everywhere below, the indices $j$ (on top) refer to corresponding sub-indices of $\alpha$ or $\beta$, in context.} $|u_k^j(t)|$ and $|w_k^j(t)|$ are finite for $t\in (0, t_1]$. Then,
 \begin{equation} \label{mono}
    u^2_k(t)< u_k^1(t) \quad \text{and} \quad
w_k^1(t)< w_k^2(t), \qquad \text{for } k=1,2\quad \text{and} \quad t\in (0,t_1].
\end{equation}
Moreover, 
\begin{equation}    \label{rate}
\begin{aligned} 
    u_1^1(t)-u_1^2(t)&> (\alpha_1-\alpha_2) t \\
    w_1^2(t)-w_1^1(t)&> (\beta_2-\beta_1) t 
\end{aligned}
\end{equation}
for all $t\in (0,t_1]$.
\end{Lemma}
\begin{proof}
As the proofs of the two cases are almost identical, without loss of generality we assume that $\alpha_2\leq \alpha_1$ and $\beta_1<\beta_2$. 

By virtue of  \eqref{integral} and by monotonicity of each one of the integrals in this formulation, it follows that there exists $\varepsilon>0$, such that the left hand side of \eqref{mono} holds true for all $t\in (0,\varepsilon]$.  Assume that $t$ lies in this segment. 
Then, 
\begin{equation}\label{non-strictfirst}
\begin{aligned}
    w_2^2(t)-w_2^1(t)&=\beta_2-\beta_1+\lambda \int_0^t \Big(\spower{u^1_1(s)}{q}-\\ & \hspace{3cm} \spower{u_1^2(s)}{q}\Big)\mathrm{d}s\\ & \geq \beta_2-\beta_1>0. 
\end{aligned}
\end{equation}
Hence,
\begin{equation} \label{non-strictsecond}
    w_1^2(t)-w_1^1(t)=\int_0^t (w_2^2(t)-w_2^1(t))\mathrm{d}s\geq(\beta_2-\beta_1)t. 
\end{equation}
Thus,
\begin{align*}
     u_2^1(t)&-u_2^2(t)\\ & =\alpha_1-\alpha_2+\int_{0}^t \left(\spower{w_1^2(s)}{p'}-\spower{w_1^1(s)}{p'}\right)\mathrm{d}s\\&>\alpha_1-\alpha_2\geq 0. 
\end{align*}
Note that the strict inequality here follows from the previous inequality and the fact that then the integral on the right hand side is strictly increasing for the stated values of $t$.
Then, 
\[
     u^1_1(t)-u^2_1(t)= \int_{0}^t (u_2^1(s)-u_2^2(s))\mathrm{d}s>(\alpha_2-\alpha_1)t.
\]
But then, going back to \eqref{non-strictfirst} with the latter, it follows that in fact
\[
    w_2^2(t)-w_2^1(t)>\beta_2-\beta_1.
\]
And, carrying on to \eqref{non-strictsecond}, we also have 
\[
    w_1^2(t)-w_1^1(t)>(\beta_2-\beta_1)t.
\]

As a conclusion of the previous paragraph, so far we now know that \eqref{mono} and \eqref{rate} hold true for $t\in(0,\varepsilon]$ with strict inequality. Let $\varepsilon\leq \varepsilon_1\leq t_1$ be the maximum $t$ such that \eqref{mono} and \eqref{rate} hold true for $t\in(0,\varepsilon_1]$ with strict inequality. As we can run an analogue of the above argument from $t=\varepsilon_1$, with suitable constants in front of all the integrals in \eqref{integral},  then necessarily $\varepsilon_1=t_1$. This completes the proof of the statement.
\end{proof}

The next remark will be repeatedly used below for the case $\varphi_j=u_1,\,w_1$ and $\varphi_k=w_1,\,u_1$.  

\begin{Remark} \label{useful}
If the entry $\varphi_j(t)$ of a solution vector $\underline{\varphi}(t)$ has 3 simple zeros at $r_0<r_1<r_2$, where $r_0\geq 0$, then
necessarily that entry should have inflection points lying between $r_0$ and $r_2$. Hence, for $k\equiv j+2 \mod 4$, the entry $\varphi_{k}$ should have at least one zero at a point $s_0\in(r_0,r_2)$ .
\end{Remark}

We now determine a mechanism for perturbing $(\alpha,\beta)$ in order to create zeros for the different components of the solution vector.

\begin{Corollary} \label{lemaB}
Let $\lambda>0$ and $p,\,q>1$ be fixed. Consider the solution to the systems $(*)$-\eqref{initialstate}. There exist $\alpha>0$ and $\beta>0$, such that both $u_1(t)$ and $w_1(t)$ vanish at least once for $t>0$. Moreover, we can find a pair $(\alpha,\beta)\in(0,
\infty)^2$, such that one of these two functions has at least two zeros, $0<t_1< t_2$, the other has one zero, $r_1\in(0,t_2)$, and these are the first zeros counting from the left of the respective functions. 
\end{Corollary}
\begin{proof}
We show the first claim. The other claims follow immediately from Remark~\ref{useful} taking $r_0=0$. Moreover, for the rest of the proof, we assume without loss of generality that $\alpha>0$ and $\beta>0$ are such that $u_1^1(t)\equiv u_1(t)$ has no positive zeros and is eventually monotonic increasing, while $w_1^1(t)\equiv w_1(t)$ has only one positive zero and is eventually monotonic decreasing. We are going to invoke Lemma~\ref{lemaA}, therefore let us set $(\alpha_1,\beta_1)=(\alpha,\beta)$.

Let $t_1>0$ be such that $w_1^1(t_1)=0$ and $w_1^1(t)>0$ for all $t\in(0,t_1)$. Then $(u_2^1)'(t_1)=0$ and $(u_2^1)'(t)<0$ for all $t\in(0,t_1)$. Hence $u^1_1(t)$ is concave for all $t\in (0,t_1)$. Let
\[
    0<\alpha_2=\frac{\alpha_1 t_1-u^1_1(t_1)}{t_1}.
\]  
Then $\alpha_2>0$  and also $\alpha_2<\alpha_1$. From Lemma~\ref{lemaA}, it follows that 
\[
      u_1^1(t_1)-u_1^2(t_1)> (\alpha_1-\alpha_2)t_1=u_1^1(t_1).
\]
Hence $u_1^2(t_1)< 0$, so this component must have a zero below $t_1$, and the proof now branches into four possibilities. 

One possibility is that either $u_1^2(t)$ or $w_1^2(t)$ are uniformly bounded in $(0,t_\infty)$. By virtue of Lemma~\ref{behasol}, then $t_\infty=\infty$. In this case, the conclusion follows from Lemma~\ref{specificlongtime}. Another possibility is that all $u_k^2(t)$ and $w_k^2(t)$ become unbounded oscillatory near $t_\infty<\infty$, see Lemma~\ref{lemaA} and its proof. But then, once again the conclusion follows. A third possibility is that the $u_k^2(t)$ point upwards (the limits at $t_\infty$ are $+\infty$), $w_k^2(t)$ point downwards and once again the conclusion follows taking $(\alpha,\beta)=(\alpha_2,\beta_1)$, because then $w^2_1(t)$ will also have a zero. 

The fourth possibility is that for the pair $(\alpha_2,\beta_1)$, $u_k^2(t)$ point downwards and $w_k^2(t)$ upwards at $t_\infty$. At this point it is not guaranteed that $w_1^2(t)$ has a positive zero. Let
\begin{align*}
   \alpha_3&=\inf\{\alpha\leq \alpha_1\,:\, \lim_{t\to t_\infty^-} u_1(t)=+\infty\} \\
   \alpha_4&=\sup\{\alpha\geq \alpha_2\,:\, \lim_{t\to t_\infty^-} u_1(t)=-\infty\}.
\end{align*}
If $\alpha_4<\alpha_3$, then for $\alpha=\frac{\alpha_3+\alpha_4}{2}$ the component $u_1(t)$ is either bounded or oscillatory as $t\to t_\infty^-$. Hence, the conclusion follows as in the previous first or second possibilities. If, on the other hand, $\alpha_3=\alpha_4$, call  $u_1^3(t)\equiv u_1(t)$ the first component of the solution vector for the pair $(\alpha_3,\beta_1)$. We consider three further sub-cases. 

If both
\begin{align*}
   \alpha_3\not \in \{\alpha\leq \alpha_1\,:\, \lim_{t\to t_\infty^-} u_1(t)=+\infty\} \qquad \text{and}\\
   \alpha_4\not\in \{\alpha\geq \alpha_2\,:\, \lim_{t\to t_\infty^-} u_1(t)=-\infty\},
\end{align*}
then $u_1^3(t)$ is either bounded or oscillatory as $t\to t_\infty^-$ and once again the conclusion follows. 
If, by contrast,
$
  \lim_{t\to t_\infty^-} u^3_1(t)=+\infty,
$
then $u^3_1(t)$ has an inflection point at the positive zero of $w^3_1(t)$. According to Lemma~\ref{continuity}, for $\delta>0$ small enough, the component $w^5_1(t)$ corresponding to the pair $(\alpha_5=\alpha-\delta,\beta_1)$ would necessarily have two zeros above this inflection point. Indeed, $u^5_1(t)$ would also have an inflection point close the one of $u^3_1(t)$ plus an extra inflection point, due to the fact that   $
  \lim_{t\to t_\infty^-} u_1^5(t)=-\infty.
$
According to Remark~\ref{useful}, then $u_1^5(t)$ has also a positive zero. Therefore, the conclusion follows for this sub-case as well. The final sub-case, involving the condition
$
  \lim_{t\to t_\infty^-} u^3_1(t)=-\infty,
$ 
can be dealt with in a similar manner.
\end{proof}

Corollary~\ref{lemaB} gives four possibilities which we label for later use.
\begin{enumerate}[I.]
\item \label{I} $u_1(t)$ is the one with two zeros, $t_1<t_2$, $w_1(t)$ the one with at least one zero $r_1<t_2$, and
\begin{enumerate}[a)]
\item \label{I.a} either $r_1\geq t_1$ 
\item \label{I.b} or $r_1< t_1$.
\end{enumerate}
\item \label{II} (Roles of $u_1$ and $w_1$ swapped), $w_1(t)$ is the one with two zeros, $r_1<r_2$, $u_1(t)$ the one with at least one zero $t_1<r_2$, and
\begin{enumerate}[a)]
\item \label{II.a} either $t_1\geq r_1$ 
\item \label{II.b} or $t_1< r_1$.
\end{enumerate}
\end{enumerate}  

Now we turn to solution vectors to $(*)$-\eqref{initialstate} whose first and third components vanish at $t_0>0$. The next lemma, whose proof reduces to describing the interlacing of the zeros between the different derivatives of a solution to \eqref{original}, is expected but it will be crucial in what follows. 

\begin{Lemma} \label{interlace}
Let $n\geq 0$. Let $u(t)$ be an eigenfunction of \eqref{original}. Then, $u(t)$ has exactly $n$ zeros counting multiplicity in the segment $(0,t_0)$ if and only if $u''(t)$ has exactly $n$ zeros counting multiplicity in  $(0,t_0)$. 
\end{Lemma}
\begin{proof}
Suppose that $u(t)$ has $n$ zeros in $(0,t_0)$ counting multiplicity. Since $u(0)=u(t_0)=0$ then,  $u'(t)$ has at least $n+1$ zeros in $(0,t_0)$. Then, $u''(t)$ must have at least $n$ zeros in $(0,t_0)$, because it must vanish in between two consecutive zeros of $u'(t)$ taking into account multiplicity. So, in the notation of the systems $(*)$, $w_1(t)$ has at least $n$ zeros counting multiplicity in $(0,t_0)$.

Now, if $w_1(t)$ has $n+1$ zeros in $(0,t_0)$ or more, arguing cyclically, we would then have that $u(t)=u_1(t)$ has $n+1$ or more zeros in $(0,t_0)$. Since this is not the case, it follow that $u''(t)$ must have exactly $n$ zeros in $(0,t_0)$.

The proof of the converse is identical.
\end{proof}

\begin{Corollary} \label{corollary1}
Let $\lambda>0$ and $p,\,q>1$ be fixed. There exist $\alpha>0$ and $\beta>0$, such that, for the solution vector of $(*)$-\eqref{initialstate}, the first and third components $u_1(t)$ and $w_1(t)$, both have a simple zero at the same point $t_1>0$ and both are strictly positive for $t\in(0,t_1)$. 
\end{Corollary}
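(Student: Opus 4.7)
My plan is to build on Lemma~\ref{lemaB} and the monotonicity of Lemma~\ref{lemaA} via a one-parameter intermediate value argument. Starting from a pair $(\alpha_0,\beta_0)$ produced by Lemma~\ref{lemaB}, the first zeros $T_u(\alpha_0,\beta_0)$ of $u_1$ and $T_w(\alpha_0,\beta_0)$ of $w_1$ both exist in $(0,t_\infty)$. If the pair already satisfies $T_u=T_w$, only simplicity needs to be verified. Otherwise the inequality is strict; by the parallel structure of the arguments below for the subcases \ref{I.b} and \ref{II.b}, one may assume $T_u(\alpha_0,\beta_0)<T_w(\alpha_0,\beta_0)$.

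Fix $\beta=\beta_0$ and treat $\alpha$ as a continuous parameter. Lemma~\ref{lemaA} gives that strict increase in $\alpha$ strictly increases $u_1(t)$ and strictly decreases $w_1(t)$ at every $t\in(0,t_\infty)$, so $\alpha\mapsto T_u(\alpha,\beta_0)$ is strictly increasing and $\alpha\mapsto T_w(\alpha,\beta_0)$ strictly decreasing wherever defined. Picard--Lindel\"of continuous dependence of the trajectory on initial data, combined with the implicit function theorem applied at the simple transversal first zero, delivers continuity of both maps. I would then show that for sufficiently large $\alpha$ the ordering flips to $T_u>T_w$, whereupon the intermediate value theorem produces some $\alpha_*>\alpha_0$ with $T_u(\alpha_*,\beta_0)=T_w(\alpha_*,\beta_0)=:t_1$, and $u_1,w_1>0$ on $(0,t_1)$ because these are the first zeros.

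For the flipping of the ordering, the leading-order analysis of \eqref{integral} for $\alpha\gg 1$ yields $u_1(t)\approx \alpha t$ near $t=0$, so that
\[
   w_2(t)=\beta_0-\lambda\int_0^t\spower{u_1(s)}{q}\,\mathrm{d}s\approx \beta_0-\tfrac{\lambda}{q}\alpha^{q-1}t^q
\]
changes sign at $t_*(\alpha)=O(\alpha^{-(q-1)/q})\to 0$. Thereafter $w_2$ drops rapidly to very negative values and drives $w_1$ back to zero on a comparable time scale, forcing $T_w(\alpha,\beta_0)\to 0$. In contrast, $T_u(\alpha,\beta_0)$ either stays bounded below by a positive quantity depending on $\beta_0$ (because the large initial $u_2(0)=\alpha$ takes time to be brought to zero and $u_1$ then to descend) or else diverges; in either subcase $T_u>T_w$ holds for some sufficiently large $\alpha$ where both are defined, which is exactly what the intermediate value argument needs.

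Finally, at the crossing parameter $\alpha_*$, if $\underline{\varphi}(t_1)=0$ entirely then uniqueness against the trivial solution of \eqref{differential} forces $\underline{\varphi}\equiv 0$, contradicting $\underline{\varphi}(0)\neq 0$. To rule out a mixed configuration in which one of $u_1,w_1$ has a simple zero at $t_1$ while the other has a double zero, I would inspect the local expansion near $t_1$ driven by $u_2'=-\spower{w_1}{p'}$ and $w_2'=-\lambda\spower{u_1}{q}$, and exploit $u_1,w_1>0$ on $(0,t_1)$ to derive a sign contradiction. The main obstacle I anticipate is making the large-$\alpha$ asymptotics fully rigorous, together with the delicate handling of the non-Lipschitz behaviour of $\spower{w_1}{p'}$ near a zero of $w_1$ in the regime $1<p'<2$; these are technical but do not seem conceptually serious.
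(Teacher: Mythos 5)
Your strategy is viable and genuinely different from the paper's proof, but one step is asserted too casually to stand as written. The paper uses the \emph{second} claim of Lemma~\ref{lemaB} (one of $u_1,w_1$ has two zeros trapping a zero of the other) and performs a \emph{finite} deformation, in $\alpha$ in case the zero $r_1$ of $w_1$ lies in $[t_1,t_2)$ and in $\beta$ otherwise, whose endpoint is fixed in advance via the rate estimate \eqref{rate} of Lemma~\ref{lemaA} (the increment $(\alpha_1-\alpha_0)t_1\geq d$ lifts $u_1$ above its dip between $t_1$ and $t_2$, resp.\ $(\beta_1-\beta_0)t_1\geq d$ lifts $w_1$ above $-d$), so the coincidence of zeros is forced inside a compact parameter interval and the degenerate coincidences are then excluded; you instead use only the first claim of Lemma~\ref{lemaB}, monotonicity of the first zeros in $\alpha$ from \eqref{mono}, and a large-$\alpha$ regime to flip the ordering. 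Your asymptotics are correct and can be made rigorous by elementary comparison: while $u_1\geq 0$ one has $w_2\leq\beta_0$, $w_1\leq \beta_0 t$ and $u_2\geq \alpha-\beta_0^{p'-1}t^{p'}/p'$, so $T_u(\alpha)\geq (p'\alpha/\beta_0^{p'-1})^{1/p'}\to\infty$, while $u_1\geq \alpha t/2$ on a fixed interval gives $w_1\leq \beta_0 t-\tfrac{\lambda(\alpha/2)^{q-1}}{q(q+1)}t^{q+1}$ and hence $T_w(\alpha)=O(\alpha^{-(q-1)/q})$; one should also record that $T_w(\alpha)$ stays defined for all $\alpha\geq\alpha_0$ (comparison with $w_1^{\alpha_0}$, and Lemma~\ref{behasol} together with \eqref{forall} if blow-up occurs earlier). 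What your route buys is the elimination of the case analysis and of the two-zero refinement; what it costs is a genuine asymptotic estimate and a more delicate continuity discussion.

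The gap is the claim that both first-zero maps are continuous ``by the implicit function theorem applied at the simple transversal first zero''. Transversality is automatic only for whichever zero occurs \emph{first}: if $T_u<T_w$, then $w_1>0$ on $(0,T_u)$ forces $u_2'=-\spower{w_1}{p'}<0$, so $u_2(T_u)<0$; but at $T_w$ the component $u_1$ has already changed sign, $w_2$ is no longer monotone, the first zero of $w_1$ may be tangential, and $T_w$ can jump as $\alpha$ varies, so a naive intermediate value argument applied to $T_u-T_w$ is not available. The repair uses strict monotonicity of $T_u$ (increasing) and $T_w$ (decreasing) in $\alpha$ plus continuous dependence of the trajectory on $(\alpha,t)$: at $\alpha_*=\inf\{\alpha\geq\alpha_0:\ T_w(\alpha)\leq T_u(\alpha)\ \text{or}\ T_u(\alpha)\ \text{undefined}\}$ pass to the one-sided limits of $T_u$ and $T_w$ and show, using that whichever zero would occur strictly earlier at $\alpha_*$ is necessarily transversal, that strict separation in either direction contradicts the definition of $\alpha_*$; hence the first zeros coincide at $\alpha_*$. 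Finally, your last paragraph is heavier than needed: at a common first zero $t_1$ with $u_1,w_1>0$ on $(0,t_1)$, the relations $u_2'=-\spower{w_1}{p'}<0$ and $w_2'=-\lambda\spower{u_1}{q}<0$ make $u_2,w_2$ strictly decreasing on $(0,t_1)$, and $u_2(t_1)\geq 0$ (resp.\ $w_2(t_1)\geq 0$) would force $u_1(t_1)>0$ (resp.\ $w_1(t_1)>0$); so both zeros are automatically simple, the fully vanishing configuration cannot occur, and no backward-uniqueness argument (delicate for this non-Lipschitz field) is needed.
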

\begin{proof}
By virtue of Lemma~\ref{interlace}, it is sufficient to show that there exists a pair $(\alpha,\beta)\in(0,\infty)^2$, such that at some $t_1>0$ both components of the solution vector vanish, $u_1(t_1)=w_1(t_1)=0$, and only one is positive up to $t_1$, say, $u_1(t)>0$ for all $t\in(0,t_1)$.

Let a pair $(\alpha_0,\beta_0)\in(0,\infty)^2$ be such that one of the possibilities of Corollary~\ref{lemaB} holds true. The proof branches accordingly.

Suppose that \ref{I}\ref{I.a} occurs. Fix 
\[
      d=\left| \min_{t_1\leq t \leq t_2} u^0_1(t)\right|>0 \quad \text{and} \quad \alpha_1= \frac{d}{t_1}+\alpha_0.   
\] 
Combining lemmas~\ref{continuity} and \ref{lemaA}, we gather that, if the parameter $\alpha$ is varied continuously from $\alpha_0$ to $\alpha_1$ and $\beta_0$ remains fixed, then  the component $u_1(t)$ of the solution increases, the difference between the two zeros $t_1-t_2$ decreases, $w_1(t)$ decreases, and $r_1$ begins its trajectory lying between $t_1$ and $t_2$.  At $\alpha_1$, according to Lemma~\ref{lemaA}, 
\[
u^1_1(t)-u^0_1(t)\geq (\alpha_1-\alpha^0) t \geq (\alpha_1-\alpha^0) t_1 \geq d
\]
for all $t_1<t<t_2$. Then, $u_1^1(t)>0$ for all $t\in (0,t_2)$. Therefore, as $t_k$ and $r_1$ are continuous functions of $\alpha$, there is an intermediate value $\alpha=\alpha_2$ such that $u_1(t_1)=u_1(t_2)=0$, $u_1(t)>0$ for all $t\in (0,t_1)$, and $w_1(r_1)=0$ for $r_1\in\{t_1,t_2\}$. If $r_1=t_1$ the conclusion follows and there is nothing else to prove. So, assume $r_1=t_2\not=t_1$. Then, $w_1(t)\not=0$ for $t\in(0,t_1]$. Moreover, by virtue of Lemma~\ref{interlace} case $n=1$, for the pair $(\alpha_2,\beta_0)$ there is a zeros of $w_1$ lying in the segment $(t_1,t_2)$. Then, by increasing $\alpha$ further, and appealing to continuity, we eventually reach a point in the deformation where we have $w_1(t)>0$ for all $t\in(0,r_1)$, $r_1=t_1$ and hence the claimed conclusion.

Suppose now that it is the possibility \ref{I}\ref{I.b} the one that holds. Let $d=|w_1^0(t_1)|=-w_1^0(t_1)$. If $\beta$ increases from $\beta_0$, then $r_1$ increases, and $t_1$ and $t_2$ decrease. Let $\beta_1$ be such that
\[
     (\beta_1-\beta_0)t_1\geq d.
\]
Then, by Lemma~\ref{lemaA}, 
\[
      w_1^1(t_1)-w_1^0(t_1)\geq (\beta_1-\beta_0)t_1\geq d.
\]
Hence $w_1^1(t_1)>0$. Therefore, by continuity, there should have been a point in the deformations of $u_1(t)$ and $w_1(t)$, where $\beta$ was such that $r_1=t_1$.

Finally, if the cases \ref{II}) occurs, we argue by swapping the roles of the components $u_1(t)$ and $w_1(t)$.
\end{proof}

The existence part of Theorem~\ref{existence} follows directly from this corollary for $t_1=t_0$. But note that $t_1$ depends on $p$, $q$ and $\lambda$. We therefore combine this with the rescaling \eqref{subst (1)} to complete the proof for general $t_0>0$. 

\begin{proof}[Proof of existence in Theorem~\ref{existence}]
There are two statements to show. 

For the first statement, let $p\not=q$ and $\lambda>0$. According to Corollary~\ref{corollary1}, there exists $t_1>0$ such that the corresponding first component of the solution vector $u_1(t)$ and its second derivative $u_1''(t)$ are positive in $(0,t_1)$ and they both vanish at $t=0$ and  $t=t_1$. This shows that there exists a solution $u(t)=u_1(t)$ to \eqref{original}, whenever $t_0=t_1$.

Now let $t_0>0$ be arbitrary. Take 
\[
     b=\frac{t_0}{t_1} \qquad \text{and} \qquad a=\left(\frac{t_1}{t_0}\right)^{\frac{2p}{p-q}}
\] 
in the substitution \eqref{subst (1)}--\eqref{subst (2)}. 
Then, $u(t)=au_1(bt)$ is a positive solution of \eqref{original} for eigenvalue $\tilde{\lambda}=\lambda a^{p-q}b^{2p}=\lambda$. This ensures the first statement of the existence claim in Theorem~\ref{existence}.

For the second statement, let $p=q$. By Corollary~\ref{corollary1}, for eigenvalue parameter $\lambda=1$, there exists $t_1>0$ such that the corresponding $u_1(t)>0$ for all $t\in(0,t_1)$ solves \eqref{original} for $t_0=t_1$. Take
\[
     b=\frac{t_0}{t_1} \qquad \text{and} \qquad a=1.
\] 
Then $u(t)=\widetilde{u_1}(t)$ in \eqref{subst (1)}--\eqref{subst (2)}, is a positive solution of \eqref{original} for $t\in(0,t_0)$ and eigenvalue
\[
    \lambda=\left(\frac{t_0}{t_1}\right)^{2p}.
\]  
This is the second statement of the existence claim in Theorem~\ref{existence}.
\end{proof}


\section{Symmetries, periodicity and uniqueness} \label{section4}

In this section we complete the proof of Theorem~\ref{existence}. We begin by showing that, for fix $p,q>1$ and $\lambda>0$, an eigenfunction $u(t)$ of \eqref{original} is unique for any choice of $(\alpha,\beta)\in [0,\infty)^2$. Here we include the statement that, the only solution in the case $(\alpha,\beta)=(0,0)$ is the trivial solution $u(t)=0$. More generally, as we establish next,  the only solution to $(*)$ satisfying the boundary conditions of \eqref{original} for which all components vanish simultaneously at a point, is the trivial solution.

\begin{Lemma} \label{leadingtouniqueness}
Let $\underline{\varphi}\in C^1([0,t_0])^4$  be a solution to $(*)$, such that $\underline{\varphi}(0)=[0,\alpha,0,\beta]^T$ and $\underline{\varphi}(t_0)=[0,\tilde{\alpha},0,\tilde{\beta}]^T$. Then, $\alpha\beta\geq 0$ and $\tilde{\alpha}\tilde{\beta}\geq 0$. Moreover, $\underline{\varphi}(t)=0$ for some $t\in[0,t_0]$ if and only if  $\underline{\varphi}(t)=0$ for all $t\in[0,t_0]$. 
\end{Lemma}
\begin{proof}
From the formulation \eqref{integral} follows that, if $\alpha$ and $\beta$ have different signs, then $\varphi_3(t)$ is of opposite sign to $\varphi_1(t)$, and so $\varphi_1(t)$ is monotonic and it therefore cannot vanish at $t=t_0$. Similarly, $\tilde{\alpha}\tilde{\beta}<0$ implies that $\varphi_3(0)\not=0$.
 This shows the first claim. For the second claim, the argument similar to the one used in the proof of Lemma~\ref{interlace}, is as follows.

Consider first the case $t=0$. Assume that $\alpha=\beta=0$. Let $u_1(t)$ have $n$ zeros (counting multiplicity) in $(0,t_0)$. Then, $u_2(t)$ has at least $n+1$ zeros in $(0,t_0)$. But, since $u_2(0)=0$, we have that $w_2(t)$ has at least $n+1$ zeros in $(0,t_0)$. This contradicts the statement of Lemma~\ref{interlace}. Hence, necessarily $\alpha\beta\not=0$.

Secondly, for $t=t_0$, apply the result already proven to the solution $\tilde{\underline{\varphi}}(t)=  \underline{\varphi}(t_0-t)$, in order to get that $\tilde{\alpha}\tilde{\beta}\not=0$.

Finally, let $t=t_1\in(0,t_0)$. The solution to $(*)$ in $[t_1,t_0]$ satisfies the boundary conditions of \eqref{original} at $t=t_1$ and $t=t_0$. That is, the vector $\tilde{\underline{\varphi}}(t)=\underline{\varphi}(t-t_1)$ is a solution to $(*)$ in $C^1([0,t_0-t_1])^4$, satisfying the boundary conditions of \eqref{original} at the end point of the segment. Then, apply the first case to obtain that  the only possibility is that $\varphi_2(t_1)\varphi_4(t_1)\not=0$.
\end{proof}

\begin{Corollary} \label{Unique Theorem}
Let $n\geq 0$. Let $u(t)$ and $\tilde{u}(t)$ be two eigenfunctions of \eqref{original} associated with eigenvalues $\lambda>0$ and $\tilde{\lambda}>0$ respectively, such that they both have $n$ zeros in $(0,t_0)$. Then, $\tilde{u}(t)=\pm au(t)$ and $\tilde{\lambda}=\lambda a^{p-q}$ for some $a>0$. 
\end{Corollary}
\begin{proof}
\underline{Case $p\not=q$}. According to Lemma~\ref{leadingtouniqueness}, without loss of generality, we can assume that both $u(t)$ and $\tilde{u}(t)$ have first and third derivatives positive at $t=0$. We prove the claim of the corollary by contradiction. Taking $b=1$ and a suitable $a>0$ in \eqref{subst (1)}, the negation of the statement to be shown, is equivalent to assuming that, for the same fixed $\lambda>0$, there exists two solutions of \eqref{original} on $(0,t_0)$ with $n$ zeros, say $u^1(t)\not=u^2(t)$. We therefore suppose this and derive a contradiction.

Let $\alpha_k=u_2^k(0)>0$ and $\beta_k=w^k_2(0)>0$. Combining 
lemma~\ref{globaluniqueness} and \ref{leadingtouniqueness}, follows that, either $\alpha_1 \not = \alpha_2$ or $\beta_1 \not = \beta_2$. Without loss of generality, we further assume that $\alpha_1\not=\alpha_2$. The other case is similar and leads to the same conclusion. 

Let $a,b>0$ be such that 
\[ a^{p-q}b^{2p}=1 \qquad \text{and} \qquad ab=\frac{\alpha_1}{\alpha_2}.\] 
Then necessarily $b\not=1$.
Let \[\hat{\alpha}=ab \alpha_2=\alpha_1 \qquad \text{and} \qquad \hat{\beta}=a^{p-1}b^{2p-1}\beta_2.\] 
Hence, $\hat{u}(t)=au^2(bt)$ is a solution of \eqref{original} in $(0,\hat{t}_0)$ for the same eigenvalue $\lambda>0$ but end point $\hat{t}_0=\frac{t_0}{b}\not=t_0$. Moreover, $\hat{u}(t)$ and $\hat{u}''(t)$ have $n$ zeros in $(0,\hat{t}_0)$. The corresponding solution vector $\underline{\hat{\varphi}}(t)$ satisfies \eqref{integral} with $\hat{u}_2(0)=\alpha_1$ and $\hat{w}_2(0)=\hat{\beta}$, for which $\hat{u}_1(t)$ and $\hat{w}_1(t)$ have $n$ zeros in $(0, \hat{t}_0)$. Additionally they are such that 
\begin{equation} \label{keytrick}
\hat{u}_1(0)=\hat{w}_1(0)=\hat{u}_1(\hat{t}_0)=\hat{w}_1(\hat{t}_0)=0. 
\end{equation}

Two possibilities now arise. Either $\beta_2<\hat{\beta}$ or $\beta_2>\hat{\beta}$. But these two possibilities are incompatible with Lemma~\ref{lemaA}. Indeed, the first possibility renders $\hat{u}_1(t)<u^2_1(t)$ and $w^2_1(t)<\hat{w}_1(t)$ for all $t>0$ whereas the second possibility renders $\hat{u}_1(t)>u^2_1(t)$ and $w^2_1(t)>\hat{w}_1(t)$ for all $t>0$. Either case, we have that either $\hat{u}_1(\hat{t}_0)\not=0$ or $\hat{w}_1(\hat{t}_0)\not=0$ or that one of these functions does not have exactly $n$ zeros in $(0,\hat{t}_0)$. Any of this, contradicts what we know already about $\hat{u}(t)$. Hence, we must have $b=1$ and so $u^2(t)=\hat{u}(t)=u^1(t)$. From this, the conclusion of the lemma for $p\not=q$ follows.

\underline{Case $p=q$}. Just as before, suppose that for the same $\lambda>0$ we have two positive solutions of \eqref{original} with $n$ zeros, $u^1(t)$ and $u^2(t)$. Let
$c=\frac{\alpha_2}{\alpha_1}$, where $\alpha_k$ are the second components of the corresponding solution vectors. Set $\hat{u}(t)=cu^1(t)$. As the equation is homogeneous, then $\hat{u}(t)$ is also a solution for the same eigenvalue $\lambda$. 

After this we can proceed in similar way as the case $p\not=q$. We have $\hat{u}_2(0)=\alpha_2=u^2_2(0)$ and, by virtue of Lemma~\ref{lemaA}, the only possibility not leading to a contradiction is that also $\hat{w_2}(0)=\beta_2=w^2_2(0)$. But then, combining lemmas~\ref{globaluniqueness} and \ref{leadingtouniqueness}, ensures that
\[
     u^2(t)=\hat{u}(t)=c u^1(t).
\]
\end{proof}

We now complete the proof of Theorem~\ref{existence}. 

\begin{proof}[Proof of uniqueness in Theorem~\ref{existence}]
The case $p\not=q$ is a direct consequence of Corollary~\ref{Unique Theorem} for $n=0$. 

Let $p=q$ instead. By virtue of Corollary~\ref{Unique Theorem}, we only need to show that $\lambda$ is unique. Suppose that we have two $\lambda_1\not=\lambda_2$ with corresponding eigenvectors $u^1(t)$ and $u^2(t)$, positive on $(0,t_0)$ and vanishing at the end points alongside their second derivatives. Set
\[
     b=\left(\frac{\lambda_2}{\lambda_1}\right)^{\frac{1}{2p}}
\] 
and consider 
$u^3(t)=u^1(bt)$. Then $\underline{\varphi}^2(t)$ and $\underline{\varphi}^3(t)$ are both solutions vectors of the systems $(*)$ for the same $\lambda\equiv \lambda_2=\lambda_1 b^{2p}$. 

Now, let 
\[
     a=\frac{u^3_2(0)}{u^2_2(0)}.
\] 
Then, $au^2_2(0)=u^3_2(0)=\alpha_3$, that is, both $au^2(t)$ and $u^3(t)$ match their derivatives at $t=0$. Now the third derivative satisfies the following three possibilities. If $\beta_2=\beta_3$, then $u^2(t)=u^3(t)$ and so $b=1$ and $\lambda_1=\lambda_2$ contradicting the original assumption. If $\beta_2<\beta_3$, according to Lemma~\ref{lemaA}, then $w^2_1(t)<w_1^3(t)$ and $u_1^2(t)>u_1^3(t)$ for all $t>0$. But this is impossible too, as then the first zeros of $u^3_1(t)$ and $w^3_1(t)$ cannot coincide, contradicting the fact that $u^3(t)$ is a dilation of $u^1(t)$. Finally, if $\beta_2>\beta_3$ we reach the same conclusion by analogous arguments. This completes the proof of uniqueness for the eigenvalue when $p=q$, and also the full proof of Theorem~\ref{existence}.
\end{proof}

The rest of this section is devoted to describing the symmetries of the eigenfunctions. The next statement is a direct consequence of the uniqueness part of Theorem~\ref{existence}.

\begin{Theorem} \label{Symmetricity Theorem}
Let $u(t)$ be a positive solution of \eqref{original} on $(0,t_0)$. Then, $u(t)=u(t_0-t)$ for all $0< t <\frac{t_0}{2}$. Moreover, $u(t)$ can be extended to a $2t_0$-periodic function $u_*\in C^1(\mathbb{R})$ satisfying \[ (\spower{u_*''(t)}{p})''=\lambda \spower{u_*(t)}{q}  \]
for all $t\in \mathbb{R}$.
\end{Theorem}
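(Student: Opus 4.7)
The plan is to reduce everything to the uniqueness portion of Theorem~\ref{existence}. First I set $v(t) = u(t_0 - t)$ on $[0,t_0]$. Because the equation $(\spower{u''}{p})'' = \lambda \spower{u}{q}$ is invariant under the reflection $t \mapsto t_0 - t$ (every derivative appears an even number of times on the left and $v''(t) = u''(t_0-t)$ is preserved), and because the Navier boundary data is symmetric, $v$ is again a positive solution of \eqref{original} on $(0,t_0)$ for the same $\lambda$. When $p\neq q$ the uniqueness statement in Theorem~\ref{existence} immediately gives $v=u$. When $p=q$ uniqueness is only up to a multiplicative constant, so $v = cu$; evaluating at $t_0/2$ yields $u(t_0/2) = cu(t_0/2)$, and since $u(t_0/2)>0$ we get $c=1$. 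Either way, $u(t) = u(t_0 - t)$, which is the asserted symmetry about $t_0/2$.

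From this symmetry I read off the value of the solution vector at $t_0$. Differentiating the identity $u(t) = u(t_0-t)$ once and three times gives $u'(t_0) = -u'(0) = -\alpha$ and $u'''(t_0) = -u'''(0)$, while $u''(t_0) = u''(0) = 0$ and $u(t_0)=0$ are already known. In the notation of the systems $(*)$, this says
\[
\underline{\varphi}(t_0) = \begin{bmatrix} 0 \\ -\alpha \\ 0 \\ -\beta \end{bmatrix},
\]
so both $u_1,u_2$ and $w_1,w_2$ flip sign between $t=0$ and $t=t_0$.

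Next I construct $u_*$. Define $u_*(t) = u(t)$ on $[0,t_0]$, $u_*(t) = -u(t-t_0)$ on $[t_0,2t_0]$, and extend to $\mathbb{R}$ by $u_*(t+2t_0) = u_*(t)$. Continuity at $t=t_0$ is clear since both sides vanish, and $u_*'(t_0^-) = u'(t_0) = -\alpha = -u'(0) = u_*'(t_0^+)$, giving $u_* \in C^1$ at $t_0$. The same check at $t=2t_0$ uses the values at $t=0$ and the just-computed values at $t=t_0$, and by $2t_0$-periodicity the matching at every translate $kt_0$ follows. On each open interval $(kt_0,(k+1)t_0)$ the equation holds because $u_*$ restricted to such an interval is $\pm u$ (of a translate) and both sides of \eqref{original} are $(p{-}1)$- and $(q{-}1)$-homogeneous in the relevant sense (and in particular odd in sign). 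It remains to verify the equation pointwise at the junction points $kt_0$; for this I check that $\spower{u_*''}{p}$ and its first derivative $(\spower{u_*''}{p})'$ — that is, the quantities $-w_1^*$ and $-w_2^*$ — agree from the two sides at $t_0$ (they equal $0$ and $-\beta$ respectively, using the previous paragraph), and that $(\spower{u_*''}{p})''(t_0) = \lambda\spower{u_*(t_0)}{q} = 0$, which holds because $u_*(t_0)=0$ and the one-sided limits of $(\spower{u_*''}{p})''$ both vanish by the same reason.

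The only real difficulty is the $p=q$ case of the first step, where Theorem~\ref{existence} does not pin down the positive solution outright. The midpoint-evaluation trick handles it cleanly, and once symmetry is in hand the remainder is bookkeeping on one-sided limits at the junctions, so I expect no further obstacle.
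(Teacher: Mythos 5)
Your argument is correct and follows essentially the same route as the paper: reflect to $v(t)=u(t_0-t)$, invoke the uniqueness part of Theorem~\ref{existence}, and in the homogeneous case $p=q$ fix the constant by evaluating at $t_0/2$, where $u>0$. Your junction bookkeeping for the odd $2t_0$-periodic extension (matching $u'$ together with $\spower{u_*''}{p}$ and its first derivative, i.e.\ $-w_1$ and $-w_2$, at the endpoints) is just a more explicit version of what the paper compresses into the single observation $u'(0)=-u'(t_0)$ — and note that the quantity to track there is $(\spower{u''}{p})'$ rather than $u'''$, which need not exist classically for $p\neq 2$, as your later paragraph in effect acknowledges.
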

\begin{proof}
Start with a solution $u(t)$. Since the equation \eqref{original} is invariant under translations and the change $t\longmapsto -t$, then
$
    \tilde{u}(t)=u(t_0-t)
$ 
is also a solution of \eqref{original}. That is, for the same $\lambda>0$ and boundary conditions. But we have uniqueness in Theorem~\ref{existence}. For $p\not=q$, this implies directly that $u(t)=\tilde{u}(t)$. For $p=q$, it implies that $u(t)=c \tilde{u}(t)$. Then
\[
     u\left(\frac{t_0}{2}\right)=c\tilde{u}\left(\frac{t_0}{2}\right)=cu\left(\frac{t_0}{2}\right),
\]
so $c=1$ and once again $u(t)=\tilde{u}(t)$.

In order to achieve the second conclusion, note that $u\in C^1(0,t_0)$ and that the first conclusion implies that $u'(0)=-u'(t_0)$.
\end{proof}

\begin{Corollary} \label{Symmetricity Coll}
Let $u(t)$ be a solution of \eqref{original} with exactly $n-1$ zeros in $(0,t_0)$. Then, these zeros are all simple and located at
\[
    t_j=\frac{jt_0}{n} \qquad j=1,\ldots, n-1.
\]
Moreover, $u''(t)$ also vanish exactly at the points $t_j$.
\end{Corollary}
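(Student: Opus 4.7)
The strategy is to exhibit a concrete reference solution with $n-1$ equally-spaced simple zeros by periodically extending a fundamental positive solution on a shorter interval, and then invoke the uniqueness-up-to-scaling result of Lemma~\ref{Unique Theorem} to pin down the zero set of an arbitrary $(n-1)$-zero solution.

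First, I would apply Theorem~\ref{existence} on the sub-interval $(0, t_0/n)$ to obtain a solution $v$ of \eqref{original} there, positive on the interior and satisfying $v(0) = v(t_0/n) = v''(0) = v''(t_0/n) = 0$, for some eigenvalue $\mu > 0$ (arbitrary when $p\neq q$, uniquely determined when $p = q$). Next, I would invoke Theorem~\ref{Symmetricity Theorem} to extend $v$ to a function $v_* \in C^1(\R)$, periodic of period $2t_0/n$, that satisfies $(\spower{v_*''}{p})'' = \mu \spower{v_*}{q}$ pointwise on $\R$. Because this extension is built by odd reflection across each node $k t_0/n$, both $v_*$ and $v_*''$ vanish precisely on the lattice $\{k t_0/n : k \in \Z\}$; these zeros are simple, since $v'(0) = \alpha \neq 0$ and $w_2(0) = -(\spower{v''}{p})'(0) = \beta \neq 0$ by the sign discussion of Section~\ref{section2}.

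Setting $u_{\mathrm{ref}} := v_*|_{[0, t_0]}$, the fact that $t_0 = n \cdot (t_0/n)$ lies on the nodal lattice ensures that $u_{\mathrm{ref}}$ satisfies the Navier boundary conditions of \eqref{original}, so it is a genuine solution of the boundary value problem on $(0, t_0)$ for eigenvalue $\mu$, with exactly $n-1$ simple interior zeros at $t_j = j t_0/n$, $j = 1, \ldots, n-1$, and with $u_{\mathrm{ref}}''$ vanishing at precisely the same set of points.

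Finally, given an arbitrary solution $u$ of \eqref{original} with exactly $n-1$ zeros in $(0, t_0)$ and eigenvalue $\lambda$, Lemma~\ref{Unique Theorem} forces $u = a\, u_{\mathrm{ref}}$ for some nonzero constant $a$ (with $\lambda = \mu a^{p - q}$). Since multiplication by a nonzero constant preserves the zero sets of a function and of its second derivative, the zeros of $u$ and of $u''$ coincide with the simple zeros $\{j t_0/n : j = 1, \ldots, n-1\}$ of $u_{\mathrm{ref}}$ and $u_{\mathrm{ref}}''$, which is exactly what is claimed. The principal bookkeeping step is verifying that the periodic extension furnished by Theorem~\ref{Symmetricity Theorem} actually places simple zeros of $v_*''$ at every lattice point $k t_0/n$ (and not only at $0$ and $t_0/n$); this, however, follows at once from the reflection symmetry $v(t) = v(t_0/n - t)$ combined with the Navier conditions $v''(0) = v''(t_0/n) = 0$, which ensure that the odd reflection used to extend $v$ also extends $v''$ across each node.
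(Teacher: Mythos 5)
Your proposal is correct and follows essentially the same route as the paper: take a positive solution on $(0,t_0/n)$, extend it periodically via Theorem~\ref{Symmetricity Theorem} to a reference solution on $(0,t_0)$ with $n-1$ equally spaced simple zeros, and conclude by the uniqueness-up-to-scaling of Lemma~\ref{Unique Theorem}. The only cosmetic difference is that the paper fixes the positive solution to have the same eigenvalue $\lambda$ from the outset, while you allow an arbitrary $\mu$ and absorb the discrepancy through the rescaling $\lambda=\mu a^{p-q}$ already contained in that lemma.
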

\begin{proof}
Let $u^1(t)$ be a positive solution of \eqref{original} on the segment $\left(0,\frac{t_0}{n}\right)$ for the same eigenvalue $\lambda>0$. From  Theorem~\ref{Symmetricity Theorem} it follows that $u^1_*(t)$ is a solution of the same equation on $(0,t_0)$ with exactly $n-1$ zeros in the interior of the segment. If $p=q$, as all eigenfunctions are multiple of one another, then the claimed conclusion follows. If $p\not=q$, by Corollary~\ref{Unique Theorem}, we obtain $u^1(t)=u(t)$ and the claimed conclusions follow. 
\end{proof}

We remark that, by combining the construction of Corollary~\ref{corollary1} with uniqueness, it is guaranteed that all the zeros of the eigenfunctions of \eqref{original} are simple.


\section{The case $q=p'$} \label{section5}

Let $\operatorname{B}(a,b)$ denote the Beta function and $\operatorname{B}_x(a,b)$ denote the incomplete Beta function \cite[8.17.1]{NIST}. 
For $1<r,s<\infty$, let
\begin{equation} \label{pi_p,q definition}
	\pi_{r,s}=2 \int_0^1\frac{\mathrm{d}\tau}{(1-\tau^s)^{\frac{1}{r}}} = \frac{2}{s} \operatorname{B}\!\left(\frac{1}{s},\frac{1}{r'}\right)
\end{equation} 
and let $\sin_{r,s}:\mathbb{R}\longrightarrow [-1,1]$ be the $2\pi_{r,s}$-periodic odd function whose inverse in $[0,\frac{\pi_{r,s}}{2}]$ is   
\[F_{r,s}(y)= \int_0^y \frac{\mathrm{d}\tau}{(1-\tau^s)^{\frac{1}{r}}} =\frac{1}{2}\operatorname{B}_{y^s}\!\!\left(\frac{1}{s},\frac{1}{r'}\right) 
\]
for all $y\in [0,1]$
and is even with respect to $\frac{\pi_{r,s}}{2}$.
Then, $\sin_{2,2}(x)=\sin(x)$ and, except possibly at the points $x=\frac{(2k+1)\pi_{r,s}}{2}$ for $k \in \Z$, the functions $\sin_{r,s}(x)$ are $C^{\infty}$. Note that \eqref{pi_p,q definition} and the fact that the Beta function is symmetric, yield the relation
	\begin{equation} \label{pi_pq dual formula}
		s \pi_{r,s}={r'} \pi_{s',r'}.
\end{equation}

In this section we show that, for $q=p'$, the systems ($*$) are solvable analytically in terms of $\sin_{r,s}$ and therefore the value of $\|E_2\|$ can be found explicitly in terms of $\operatorname{B}(a,b)$. 
The next two statements summarize our main finding.

\begin{Theorem} \label{E_2 q=p'}
	Let $1<p<\infty $ and fix $\mathcal{I}=[0,\pi_{2,p'}]$. Then
	\begin{equation} 
\|E_2\|=\sup_{f\in W_{\mathrm{D}}^{2,p}(\mathcal{I})} {\|f\|_{L^{p'}}  \over \|f'' \|_{L^p} }  = \left(\frac{2}{p'}\right)^{\frac{2}{p'}}\left( \operatorname{B}\!\left(\frac{1}{2}, \frac{p'+1}{p'} \right)\right)^{\frac{1}{p'}-\frac{1}{p}} \label{Th 5.1 eq}
	\end{equation} 
and the extremal functions are of the form $f(t)=c \sin_{2,p'}(t)$ where $c\in\mathbb{R}$ is a non-zero constant.
\end{Theorem}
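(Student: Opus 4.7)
The plan is to verify directly that $u(x)=\sin_{2,p'}(x)$ solves \eqref{original} on $\mathcal{I}=[0,\pi_{2,p'}]$ with eigenvalue $\lambda=(p'/2)^p$, to identify it with the duality extremal via Theorem~\ref{existence}, and then to evaluate the resulting ratio $\|E_2\|$ in closed form by a Beta-function substitution. Differentiating the defining identity $F_{2,p'}(\sin_{2,p'}(x))=x$ once on $(0,\pi_{2,p'}/2)$ yields $(\sin_{2,p'}'(x))^2=1-\sin_{2,p'}(x)^{p'}$, and differentiating once more produces
\[
     \sin_{2,p'}''(x) = -\frac{p'}{2}\,\spower{\sin_{2,p'}(x)}{p'},
\]
which extends to all of $\mathbb{R}$ via the even/odd symmetries of $\sin_{2,p'}$. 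Invoking the conjugate-exponent identities $(p-1)(p'-1)=1$ and $p(p'-1)=p'$, and using $\sgn(u'')=-\sgn(u)$, I would obtain $\spower{u''}{p}=-(p'/2)^{p-1}u$. Differentiating this twice and feeding in the ODE for $u''$ then gives
\[
     \left(\spower{u''}{p}\right)''=\left(\frac{p'}{2}\right)^p\spower{u}{p'},
\]
so that $u$ satisfies \eqref{original} with $\lambda=(p'/2)^p$. The Navier boundary conditions hold trivially since $\sin_{2,p'}(0)=\sin_{2,p'}(\pi_{2,p'})=0$, and the same identity forces $u''(0)=u''(\pi_{2,p'})=0$.

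Next I would identify $u$ with the extremal $u_\mathrm{D}$. The duality-map construction of Section~\ref{section1} produces $u_\mathrm{D}\in W^{2,p}_\mathrm{D}(\mathcal{I})$ solving \eqref{original} and realising $\|E_2\|$, which (being the first eigenfunction) may be taken positive on $(0,\pi_{2,p'})$. Theorem~\ref{existence} combined with the re-scaling \eqref{subst (1)}--\eqref{subst (3)} implies that the positive solutions of \eqref{original} on $(0,\pi_{2,p'})$ form a one-parameter family of scalar multiples of $\sin_{2,p'}$; since the ratio $\|\cdot\|_{p',\mathcal{I}}/\|\cdot''\|_{p,\mathcal{I}}$ is $0$-homogeneous, every extremal $f$ is of the claimed form $f=c\sin_{2,p'}$ for some $c\ne 0$.

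Finally I would compute the ratio explicitly with $u=\sin_{2,p'}$. The change of variables $y=\sin_{2,p'}(x)$ on $[0,\pi_{2,p'}/2]$, under which $\mathrm{d}x=(1-y^{p'})^{-1/2}\,\mathrm{d}y$, followed by $z=y^{p'}$, gives
\[
     \|\sin_{2,p'}\|_{p',\mathcal{I}}^{p'}=2\int_0^1\frac{y^{p'}}{(1-y^{p'})^{1/2}}\,\mathrm{d}y=\frac{2}{p'}\operatorname{B}\!\left(\frac{1}{2},\frac{p'+1}{p'}\right),
\]
while the identity derived in the first paragraph produces $|u''|^p=(p'/2)^p|u|^{p'}$, hence $\|u''\|_{p,\mathcal{I}}=(p'/2)\|u\|_{p',\mathcal{I}}^{p'/p}$. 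Therefore
\[
     \|E_2\|=\frac{\|u\|_{p',\mathcal{I}}}{\|u''\|_{p,\mathcal{I}}}=\frac{2}{p'}\,\|\sin_{2,p'}\|_{p',\mathcal{I}}^{(p-p')/p}.
\]
Substituting the Beta-function value and simplifying the exponent $1+\frac{1}{p'}-\frac{1}{p}=\frac{2}{p'}$ via the conjugate-exponent identity $p+p'=pp'$ delivers precisely \eqref{Th 5.1 eq}. The main obstacle is the identification step in the second paragraph; the derivation of the fourth-order ODE satisfied by $\sin_{2,p'}$ and the subsequent Beta-function computation are mechanical once the sign conventions in $\spower{\cdot}{\cdot}$ are tracked carefully.
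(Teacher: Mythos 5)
Your verification that $u=\sin_{2,p'}$ satisfies \eqref{original} with $q=p'$ and $\lambda=(p'/2)^p$, and your Beta-function evaluation of the quotient $\|u\|_{p',\mathcal{I}}/\|u''\|_{p,\mathcal{I}}$, are both correct and coincide with the computations the paper performs (the identity $u''=-\tfrac{p'}{2}\spower{u}{p'}$, the integral $\int_0^{\pi_{2,p'}/2}(\sin_{2,p'})^{p'}=\tfrac1{p'}\operatorname{B}(\tfrac12,\tfrac{p'+1}{p'})$, and the exponent simplification). But your overall strategy is not the paper's: the paper never identifies the maximizer with an eigenfunction of \eqref{original}. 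It factorizes $E_2=I_2I_1$ through $Z=W^{1,2}_0(\mathcal{I})$, proves $\|E_2\|\le N_1N_2$ where $N_1$ is Talenti's sharp constant \eqref{cubsopti} for $W^{1,2}_0\to L^{p'}$ and $N_2$ is the sharp zero-mean (Hardy-operator) constant \eqref{meanzero} for $\|u\|_2/\|u'\|_p$, and then shows the product is attained at $\sin_{2,p'}$; uniqueness of the extremal is inherited from uniqueness in the two first-order problems. That route deliberately avoids the step on which your argument hinges.

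That step is a genuine gap: the parenthetical ``which (being the first eigenfunction) may be taken positive on $(0,\pi_{2,p'})$'' asserts precisely what has to be proved. The duality-map argument of Section~\ref{section1} only gives that some maximizer solves \eqref{original}; it gives no control on its number of interior zeros. Note that for $q=p'\neq p$ every $\lambda>0$ is an eigenvalue (Theorem~\ref{existence}), so ``first eigenfunction'' cannot mean smallest eigenvalue and must mean ``zero-free'', and the standard first-order device of replacing $u$ by $|u|$ is unavailable here because $|u|\notin W^{2,p}$ at a sign change with nonvanishing derivative. To close the gap within your framework you would need to argue, e.g., that the maximizer, being an eigenfunction, falls into the classification of Lemma~\ref{Unique Theorem}, Corollary~\ref{Symmetricity Coll} and Theorem~\ref{Theorem5}: an eigenfunction with $n-1$ interior zeros on $[0,\pi_{2,p'}]$ is $c\sin_{2,p'}(n\,\cdot)$, whose quotient is $n^{-2}\,\tfrac{2}{p'}\|\sin_{2,p'}\|_{p',\mathcal{I}}^{1-p'/p}$, so a norm-realizing eigenfunction must have $n=1$; this also requires excluding eigenfunctions with infinitely many zeros (zeros of $u_1$ accumulating at a point force all four components of $\underline{\varphi}$ to vanish there, whence $u\equiv0$ by uniqueness for the system $(*)$). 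Without such an argument, or without switching to the paper's factorization proof, the identification paragraph is an assertion of the conclusion rather than a proof.
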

 
Let us now re-write the equation \eqref{original} with the substitution $q=p'$. For $1<p<\infty$, we seek  for $u\not=0$ and $\lambda>0$ such that
		\begin{equation} \label{originalq=p'}
		\begin{aligned}
			&(\spower{u''}{p})''=\lambda \spower{u}{p'} & 0\leq t \leq t_0 \\
			&u(0)=u(t_0)=u''(0)=u''(t_0)=0.
		\end{aligned}
	\end{equation}

\begin{Theorem} \label{Theorem5}
The eigenvalues and eigenfunctions of \eqref{originalq=p'} are fully characterized as follows. For any given constant $c>0$ and $n\in\mathbb{N}$, $\lambda=\lambda_n$ is of the form
\[
	\lambda_n=\frac{\left( \pi_{2,p'} \pi_{p,2}n^2 \right)^p}{t_0^{2p}} c^{p-p'}= \left(\frac{p' \pi_{2,p'}^2 n^2}{2t_0^{2}}\right)^p c^{p-p'}
\]
with corresponding $u(t)=f_{n,c}(t)$ of the form
	\begin{equation} \label{n-eigenfunction}
		f_{n,c}(t)=c\sin_{2,p'}\left(\frac{\pi_{2,p'}n t}{t_0}\right).
	\end{equation}
For $p\not=2$, this eigenpair is the unique solution such that the eigenfunction has positive derivative at $x=0$ and changes sign exactly $n-1$ times on $(0,t_0)$. 
\end{Theorem}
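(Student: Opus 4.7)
The plan is to reduce the fourth-order equation \eqref{originalq=p'} to a second-order $p'$-Laplacian problem whose solutions are the generalized sines $\sin_{2,p'}$, and then use the uniqueness machinery of Sections~\ref{section3}--\ref{section4} to deduce that no other eigenfunctions exist with the stated sign pattern.

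The key observation making $q=p'$ tractable is that $\spower{\cdot}{p}$ and $\spower{\cdot}{p'}$ are mutual inverses. Setting $v:=-\spower{u''}{p}$, so that $u''=-\spower{v}{p'}$, equation \eqref{originalq=p'} becomes the symmetric coupled system
\[
u''=-\spower{v}{p'}, \qquad v''=-\lambda\spower{u}{p'}.
\]
I would then try the Ansatz $v=\mu u$ for a constant $\mu>0$; matching the two equations forces $\mu^{p'}=\lambda$ and collapses the system to the single second-order ODE
\[
u''=-\lambda^{1/p}\spower{u}{p'},
\]
with $u(0)=u(t_0)=0$ (and $u''(0)=u''(t_0)=0$ is automatic, since $u''$ is proportional to $\spower{u}{p'}$). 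Using $(\sigma')^2=1-\sigma^{p'}$ for $\sigma=\sin_{2,p'}$ on its fundamental interval, one differentiates to obtain $\sigma''=-\tfrac{p'}{2}\spower{\sigma}{p'}$. Plugging $u(x)=c\sigma(\omega x)$ into the reduced ODE and enforcing the endpoint conditions yields the two relations $\omega=n\pi_{2,p'}/t_0$ and $\lambda^{1/p}=\tfrac{p'}{2}\omega^2 c^{2-p'}$. A routine computation, using $p(2-p')=p-p'$ and the duality identity $p'\pi_{2,p'}=2\pi_{p,2}$ from \eqref{pi_pq dual formula}, then puts $\lambda_n(c)$ in both equivalent forms claimed by the theorem; counting zeros of $\sigma$ confirms that $f_{n,c}$ has precisely $n-1$ sign changes in $(0,t_0)$ and positive derivative at $0$.

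For uniqueness when $p\neq 2$, suppose $u$ is any eigenfunction of \eqref{originalq=p'} with $u'(0)>0$ and exactly $n-1$ sign changes in $(0,t_0)$. Corollary~\ref{Symmetricity Coll} places its interior zeros at $jt_0/n$, so the restriction $u|_{[0,t_0/n]}$ is a positive solution of \eqref{original} on that shorter interval for the same $\lambda$. Because $p\neq p'$, the exponent $p-p'$ is nonzero and $c\mapsto\lambda_n(c)$ is a bijection of $(0,\infty)$ onto itself, so there is a unique $c>0$ with $\lambda_n(c)=\lambda$. The uniqueness half of Theorem~\ref{existence} (in the case $p\neq q$) then forces $u=f_{n,c}$ on $[0,t_0/n]$, and the symmetric/periodic extension provided by Theorem~\ref{Symmetricity Theorem} together with Corollary~\ref{Symmetricity Coll} promotes this to equality on the full interval $[0,t_0]$.

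The main obstacle I anticipate is purely bookkeeping: keeping the signed powers honest through the Ansatz $v=\mu u$ (which requires $\spower{\mu u}{p'}=\mu^{p'-1}\spower{u}{p'}$ for $\mu>0$) and through the subsequent scaling in $c$ and $\omega$, and being careful that the ODE $\sigma''=-\tfrac{p'}{2}\spower{\sigma}{p'}$ extends consistently beyond the fundamental interval where the identity $(\sigma')^2=1-\sigma^{p'}$ was derived. Once the reduction to a second-order equation explicitly solved by $\sin_{2,p'}$ is in place, everything else follows from results already established in Sections~\ref{section3}--\ref{section4}.
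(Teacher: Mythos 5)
Your proposal is correct and follows essentially the same route as the paper: you verify the eigenpairs through the identity $\sin_{2,p'}''=-\tfrac{p'}{2}\spower{\sin_{2,p'}}{p'}$ (your substitution $v=-\spower{u''}{p}$ with the proportionality Ansatz amounts to the paper's trick of applying that identity twice), with the same scaling bookkeeping producing $\lambda_n(c)=\bigl(\tfrac{p'}{2}\omega^2\bigr)^p c^{\,p-p'}$. The only cosmetic difference is the uniqueness step, which the paper obtains in one stroke from Lemma~\ref{Unique Theorem}, whereas you localize to $[0,t_0/n]$ and combine Corollary~\ref{Symmetricity Coll} with the uniqueness part of Theorem~\ref{existence}; both arguments rest on the same Section~\ref{section4} machinery, and your final extension from $[0,t_0/n]$ to $[0,t_0]$ is most cleanly closed by Lemma~\ref{globaluniqueness} or Lemma~\ref{Unique Theorem}.
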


We prove the validity of these two statements below. Let us begin by recalling properties of the generalized trigonometric functions and their role in the solution of the equation \eqref{classical_embedding}, associated to first order Sobolev embeddings $E_1$. The eigenpairs $(u,\lambda)$ of \eqref{classical_embedding}, have a close expression in terms of $\pi_{p,q}$ and $\sin_{p,q}$. Concretely, the full set of solutions of the Dirichlet problem \eqref{classical_embedding} is \cite{DM} 
\[u(t)\equiv u_{n,\alpha}(t)= {\alpha t_0 \over n\pi_{p,q}} \sin_{p,q}\left({n\pi_{p,q} \over t_0} \ t\right),
\]
for corresponding
\[ \lambda\equiv \lambda_{n,\alpha}= \left(\frac{n\pi_{pq}}{t_0}\right)^q \frac{|\alpha|^{p-q} q (1-p)}{p},
\]
where $\alpha\not=0$ is a real parameter and $n\in\mathbb{N}$. Here we focus on the case $q=p'$, which is independent of $\alpha$ only for $p=q=2$.

The generalised cosine, $\cos_{p,q}: \R \to [-1,1]$, is defined as 
\[\cos_{p,q}(x) = \frac{\mathrm{d}}{\mathrm{d} x} \sin_{p,q} (x), \qquad x \in \R.\]
From the properties of $\sin_{p,q}(x)$ it follows that $\cos_{p,q}(x)$ is an even, $2\pi_{p,q}$ periodic function, decreasing on $[0, \pi_{p,q}/2]$. If $x\in [0, \pi_{p,q}/2]$, then
\begin{equation} \label{cos_p,q formula}
	\cos_{p,q}(x)=(1-(\sin_{p,q} x)^{q})^{1/p} 
\end{equation}
and 
\[|\sin_{p,q}x|^q + |\cos_{p,q}x|^p=1\] 
for all $x\in \R.$

The functions $\sin_{p,p}$ and $\cos_{p,p}$ have a long history that can be traced back to about 40 years ago. Indeed, these and other analogue functions were examined by 
Schmidt \cite{Sch1}, Lindqvist \cite{Lin01,Lin02}, Elbert \cite{Elb}, and {\^ O}tani \cite{Ota01},  in connection with extremal functions for Hardy operators and eigenvalues of $p$-Laplacians. In recent years many properties of $\sin_{p,q}$  and $\cos_{p,q}$ have been discovered, mainly as a consequence of the study of their approximation characteristics. An account of this can be found in \cite{BM1,BM2,Melkonian2018}. It has also been discovered that these play a significant role in describing optimal Sobolev embeddings and related integral operators \cite{EL-book1}. 

The proof of Theorem~\ref{E_2 q=p'} relies on the decomposition $E_2=I_1 I_2$ where the embeddings $I_j$ are to be understood in the context of the following diagram,
 \begin{equation}
 	\begin{tikzcd}[column sep=small]
 		X=W^{2,p}_D \arrow{dr}{I_1} \arrow{rr}{E_2} & & Y=L^{p'} \\
 		&  Z=W^{1,2}_0 \arrow{ur}{I_2} & 
 	\end{tikzcd}
 \end{equation}
We will see that $u(t)=\sin_{2,p'}(\pi_{2,p'} t/t_0)$ is the extremal function for embedding  $I_2$ and $u'(t)$ is the extremal functions for $I_1$.  Hence, $u(t)$ is extremal for $E_2$ and eventually that would lead to $\|E_2\|=\|I_1\| \|I_2\|$. 

Before establishing the proofs of theorems~\ref{E_2 q=p'} and \ref{Theorem5}, we recall four additional known formulas connecting properties of the generalized trigonometric functions and their derivatives. See \cite[Lemma 2.2, Props. 3.1 and 3.2]{EGL}. Let $r,s \in (1,\infty)$. A direct calculation gives 
	\begin{align*}
	\cos_{r,s}' x&= -{r\over s}(\cos_{r,s} x)^{2-r}(\sin_{r,s} x)^{s-1} \qquad \text{and}\\ 
	 [(\sin_{r,s}x)^{r-1}]'&=(r-1)(\sin_{r,s}x)^{r-2}\cos_{r,s}x. \end{align*}
Thus,  as in \cite{EGL}, we get the general formula
	\begin{equation}
		\left[ \cos_{r,s}(\pi_{r,s} t /2)\right]^r=\left[\sin_{s',r'}(\pi_{s',r'}(1-t)/2)\right]^{r'}, 
		\label{sum formula p.q}
	\end{equation}
which we will employ mostly in the case $r=2$ and $s=p'$. Finally, let $u(x)=\sin_{2,p'}(x)$.
Applying \eqref{sum formula p.q}, then \eqref{pi_pq dual formula}, yields
		\begin{equation} 
	\label{u'' = [u(x)]...}
	u''(x)=-\spower{u(x)}{p'} {\pi_{p,2} \over \pi_{2,p'}}=-\spower{u(x)}{p'} {p' \over 2}.
\end{equation}
 
 \begin{proof}[Proof of Theorem \ref{E_2 q=p'}]
As $W_{\mathrm{D}}^{2,p}=W_0^{1,p}\cap W^{2,p}$, it is readily seen that \[W_{\mathrm{D}}^{2,p}=\left\{f\in W^{2,p}(\mathcal{I})\,:\, f(0)=0 \mbox{ and } \int_\mathcal{I} f'=0 \right\}.\] Moreover, $W_{\mathrm{D}}^{2,p} \subset W^{1,q}_0$ for any $1<q<\infty$ and in particular for $q=2$. We will use these facts below.

We begin by finding an upper bound for $\|E_2\|$. Note that
\[ 	\begin{aligned}
 		\|E_2\|&=\sup_{f\in W_{\mathrm{D}}^{2,p}}  {\|f\|_{L^{p'}}  \over \|f'' \|_{L^p} } \\ & = \sup_{f\in W_{\mathrm{D}}^{2,p}} {\|f\|_{L^{p'}}  \over \|f' \|_{L^2} } \,{\|f'\|_{L^2}  \over \|f'' \|_{L^p} } 	\\
 		& \le  \sup_{f\in W_{\mathrm{D}}^{2,p}} {\|f\|_{L^{p'}}  \over \|f' \|_{L^2} } \sup_{f\in W_{\mathrm{D}}^{2,p}} {\|f'\|_{L^2}  \over \|f'' \|_{L^p} }= N_1 N_2.
 	\end{aligned} 
 	\]
Let us compute the exact values of these constants.

On the one hand, we claim that 
\begin{equation} \label{eqN1} 
N_1=\frac{\pi_{2,p'}^{\frac12+\frac{1}{p'}} (2+p')^{\frac{1}{2}-\frac{1}{p'}} (p')^{\frac12}}{2^{\frac{1}{p}} \operatorname{B}(\frac{1}{p'},\frac{1}{2})}=(2+p')^{\frac{1}{2}-\frac{1}{p'}}
\operatorname{B}\!\left(\frac{1}{2},\frac{1}{p'}\right)^{\frac{1}{p'}-\frac12}
2^{\frac{2}{p'}-\frac{1}{2}}
(p')^{-\frac{1}{p'}}.
\end{equation}
The second equality follows from  \eqref{pi_p,q definition}.
To show the first equality, recall the following classical result of Talenti \cite[page 357]{Ta1} (see also \cite[(45.4)]{MPF}) for general segment $\mathcal{I}=[0,t_0]$. For all $1<r,s<\infty$,
 \begin{equation} \label{cubsopti}
		\sup_{f\in W^{1,r}_0} \frac{\|f\|_{L^s}}{\|f'\|_{L^r}}	 =\frac{t_0^{\frac{1}{r'}+\frac{1}{s}}(r'+s)^{\frac{1}{r}-\frac{1}{s}}(r')^{\frac{1}{s}} s^{\frac{1}{r'}}}{2 \operatorname{B}(\frac{1}{s},\frac{1}{r'})}
	\end{equation}
and the extremals of this are any non-zero multiple of $\sin_{r,s}({\pi_{r,s}t \over t_0})$.  Substituting $t_0=\pi_{2,p'}$, $r=2$ and $s=p'$, it follows that the middle expression of \eqref{eqN1} and the right hand side of \eqref{cubsopti} coincide. Then, since the optimizer $\sin_{2,p'}\in W_{\mathrm{D}}^{2,p}$, we have that
\[
      		N_1=\frac{\|\sin_{2,p'}\|_{L^{p'}}}{\|\cos_{2,p'}\|_{L^2}}\leq \sup_{f\in W^{2,p}_{\mathrm{D}}} \frac{\|f\|_{L^{p'}}}{\|f'\|_{L^2}}\leq \sup_{f\in W^{1,2}_0} \frac{\|f\|_{L^{p'}}}{\|f'\|_{L^2}} =N_1.
\] 
This confirms the first equality of \eqref{eqN1}.

On the other hand, we find the value of $N_2$ as follows. By substituting $f'=u$,
\[
     N_2=\sup_{f\in W^{2,p}_{\mathrm{D}}} \frac{\|f'\|_{L^2}}{\|f''\|_{L^p}}=\sup_{u\in S} \frac{\|u\|_{L^2}}{\|u'\|_{L^p}}
\]
where $S\subseteq \mathrm{Z}=\{u\in\mathrm{AC}(\mathcal{I})\,:\,\int_{\mathcal{I}} u=0\}$ is the subspace
\[
   S=\{u\in\mathrm{AC}(\mathcal{I})\,:\, u=f'\text{ for some }f\in W^{2,p}_{\mathrm{D}}(\mathcal{I})  \}.
\]
Now, let us show that
\begin{equation} \label{meanzero}
\sup_{u\in \mathrm{Z}} \frac{\|u\|_{L^2}}{\|u'\|_{L^p}}= 
		\frac{t_0^{\frac{1}{p'}+\frac12}(p'+2)^{\frac{1}{p}-\frac12}(p')^{\frac12} }{2^{\frac{1}{p}} \operatorname{B}(\frac12,\frac{1}{p'})}
\end{equation}
where the extremals are non-zero multiples of $\cos_{p,2}({\pi_{p,2}t \over t_0})$ on $\mathcal{I}=[0,t_0]$. 
Indeed, recall that the optimizer of the supremum on the left hand side is odd with respect to $t_0/2$ (see \cite{GN} or \cite{DGS}). Then,
\[
    \sup_{u\in \mathrm{Z}} \frac{\|u\|_{L^2}}{\|u'\|_{L^p}}=\sup_{v\in L^p(0,t_0/2)}\frac{\|Hv\|_{L^2(0,t_0/2)}}{\|v\|_{L^p(0,t_0/2)}},
\]
where $H:L^p(0,t_0/2)\longrightarrow L^2(0,t_0/2)$ is the Hardy operator.
According to \cite[Theorem 4.6]{EL-book1}, the supremum on the right hand side is attained whenever $u$ is any non-zero multiple of $\cos_{p,2}({\pi_{p,2}t \over t_0})$. Hence, from \eqref{cubsopti} with $r=p$ and $s=2$ together with the fact that
\[
   H\left[\cos_{p,2}(\pi_{p,2}(\cdot)/t_0)\right](t)=\frac{t_0\sin_{p,2}({\pi_{p,2}t \over t_0})}{\pi_{p,2}},
\] we obtain \eqref{meanzero} with the extremals as claimed. 
Now, for $t_0=\pi_{2,p'}$, $\cos_{p,2}(\frac{\pi_{p,2}}{\pi_{2,p'}}(\cdot))\in S$. Hence, 	
\[
\frac{\|\sin_{p,2}(\frac{\pi_{p,2}}{\pi_{2,p'}}(\cdot))\|_{L^2}}{\|\cos_{p,2}(\frac{\pi_{p,2}}{\pi_{2,p'}}(\cdot))\|_{L^p}}\leq 
N_2\leq \sup_{u\in {\mathrm{Z}}} \frac{\|u\|_{L^2}}{\|u'\|_{L^p}}=\frac{\|\sin_{p,2}(\frac{\pi_{p,2}}{\pi_{2,p'}}(\cdot))\|_{L^2}}{\|\cos_{p,2}(\frac{\pi_{p,2}}{\pi_{2,p'}}(\cdot))\|_{L^p}}.
\]
Thus
\[
    N_2=\frac{\pi_{p,2}^{\frac{1}{p'}+\frac12}(p'+2)^{\frac{1}{p}-\frac12}(p')^{\frac12}}{2^{\frac{1}{p}} \operatorname{B}(\frac12,\frac{1}{p'})}=(2+p')^{\frac{1}{p}-\frac12} 
\operatorname{B}\!\left(\frac12,\frac{1}{p'}\right)^{\frac{1}{p'}-\frac{1}{2}}
2^{\frac{2}{p'}-\frac{1}{2}}
 (p')^{-\frac{1}{p'}}
.
\]

Therefore, we get
 	\begin{equation} \label{last =}
 		N_1N_2=\left(\frac{2}{p'}\right)^{\frac{2}{p'}}\operatorname{B}\!\left(\frac12,\frac{p'+1}{p'}\right)^{\frac{1}{p'}-\frac{1}{p}}.
 	\end{equation}
So we have an upper bound for $\|E_2\|$. Let us now show that there is equality. Set  $u_1(x)= \sin_{2,p'}(x)$. By using \eqref{u'' = [u(x)]...}  we obtain
 	\[
 	 {\|u_1\|_{L^{p'}}  \over \|u_1'' \|_{L^p} }= {2 \over p'} \| u_1\|_{L^{p'}}^{1-\frac{p'}{p}}.
 	\]
 	 Now,
 	\[\int_0^{\pi_{2,p'}/2} \left( \sin_{2,p'} (x) \right)^{p'} \mathrm{d}x = \int_0^1 \frac{\tau^{p'}}{\left(1-\tau^{p'}  \right)^{1/2}} \mathrm{d}\tau=
 	\frac{1}{p'}\operatorname{B}\left(\frac{1}{2}, \frac{p'+1}{p'} \right),
 	\]
 	which can be obtain by substituting $\tau=\sin_{2,p'}(x),\, \mathrm{d}\tau=\cos_{2,p'}(x)$ and invoking identity \eqref{cos_p,q formula}. Thus,
 	\[  {\|u_1\|_{L^{p'}}  \over \|u_1'' \|_{L^p} }= \left(\frac{2}{p'}\right)^{1+\frac{1}{p'}(1-\frac{p'}{p})}\left( \operatorname{B}\left(\frac{1}{2}, \frac{p'+1}{p'} \right)\right)^{1/p'-1/p}   \]
 	which gives exactly the expression for $\|E_2\|$. 
 	
The uniqueness of the extremal function follows from the uniqueness of the extremal functions in the above arguments.
 \end{proof}

\begin{Remark} Evidently, Theorem~\ref{E_2 q=p'} follows from Theorem~\ref{Theorem5}, as the extremal function of $E_2$ is the first eigenfunction of \eqref{originalq=p'}. However, the proof we include above
has the advantage of clearly distinguishing the connection between first order embeddings and second order embeddings in the general case for $p$ and $q$. It shows that only for the case $q=p'$ the extremal functions coincide. 
\end{Remark}

  \begin{proof}[Proof of Theorem \ref{Theorem5}]
Let $t_0=\pi_{2,p'}$.
From \eqref{u'' = [u(x)]...}, applied twice,  we obtain that for $u(t)=\sin_{2,p'}(t)$ we have
\[((u''(t))^{p-1})''= -\left(\frac{p'}{2}\right)^{p-1}u''(t) =\left(\frac{p'}{2}\right)^{p}u^{p'-1}(t), 
\]
for all $0<t<\pi_{2,p'}$. As $u''(t)=-\frac{p'}{2}u^{p'-1}(t)$, then also $u''(0)=u''(\pi_{2,p'})=0$. Hence, $\sin_{2,p'}(t)$ is a positive eigenfunction for the problem  \eqref{originalq=p'} on $\mathcal{I}=[0,\pi_{2,p'}]$ with $\lambda = (p' / 2)^p$. 

For the general case $t_0>0$, observe that
\begin{equation} 
	f_{n,c}(t)=c\sin_{2,p}(\pi_{2,p'}n t/t_0),
\end{equation}
satisfies   \eqref{originalq=p'} on $[0,t_0]$ with eigenvalue
\[
\lambda_n=\frac{\left( \pi_{2,p'} \pi_{p,2}n^2 \right)^p}{t_0^{2p}} c^{p-p'}. \] 
Note that $\lambda_n$ is the $n$-th eigenvalue in the spectrum, for fixed $c>0$ and that $f_{n,c}$ has exactly $n-1$ zeros in $(0,t_0)$. Finally, uniqueness follows directly from Corollary~\ref{Unique Theorem}.
\end{proof}


\section{Approximation of Sobolev embedding} \label{section6}

In this final section we derive a precise connection between the s-numbers of $E_2$ and the eigenpairs of \eqref{original}. For this purpose, it is necessary to fix the norm of the eigenfunctions. We will call an eigenpair $({f}, {\lambda})$ a spectral couple of \eqref{original}, if $\|{f}''\|_{L^p}=1$ and $f'(0)>0$. Below we refer to such ${f}$ as a spectral function and we refer to the corresponding eigenvalue ${\lambda}>0$ as a spectral number. 

In earlier publications, the choice $\|\hat{f}\|_{L^q}=1$ is used. To distinguish the connection with our choice of normalization, we write $(\hat{f}, \hat{\lambda})$ for the corresponding spectral couple of this second kind. 

The connection between any eigenpair and either of the above choices becomes evident via re-scaling. Indeed, let $(\tilde{f},\tilde{\lambda})$ be any eigenpair of \eqref{original}. Then,  
\[\tilde{\lambda}=\frac{\|\tilde{f}''\|_{L^p}^p}{\|\tilde{f}\|_{L^q}^q}.
\]
Let 
\[ \alpha= \frac{\operatorname{sgn}(f'(0))}{\|\tilde{f}''\|_{L^p}}  \qquad \text{ and } \qquad \hat{\alpha}=\frac{1}{\|\tilde{f}\|_{L^q}}. \] Then, 
\[ \left(f(t)={\alpha} \tilde{f}(t),\quad \lambda= \left( \frac{\|\tilde{f}''\|_{L^p}}{\|\tilde{f}\|_{L^q}} \right)^q\right)\] is a spectral couple and 
\[ \left(\hat{f}(t)= \hat{\alpha} \tilde{f}(t), \quad   \hat{\lambda}= \left( \frac{\|\tilde{f}''\|_{L^p}}{\|\tilde{f}\|_{L^q}} \right)^p \right) \]
is a spectral couple of the second kind. Our discussion below only refers to spectral couples of the first kind.

The next lemma shows that the spectral functions form a unique chain, linked by re-scaling and generating a corresponding chain of spectral numbers. In turn, the latter form an increasing sequence of positive numbers accumulating at $+\infty$.

\begin{Lemma} \label{only one in spectrum}
Let $1<p,q<\infty$ and $n\in \mathbb{N}$. 
\begin{enumerate}[i)]
\item There is a unique spectral couple $(f,\lambda)$ such that $f$ has $n-1$ distinct zeros in $\operatorname{Int}(\mathcal{I})$.
\item  Let $(f_1,\lambda_1)$ be the spectral couple on $\mathcal{I}=[0,1]$ where $\lambda_1$ is the first spectral number. Let $f_{1*}:\mathbb{R}\longrightarrow \mathbb{R}$ be the 2-periodic odd function, such that $f_{1*}(t)=f_1(t)$ for all $t\in [0,1]$.  Then, 
	\[ \left(f_n(t)=\frac{f_{1*}(nt)}{n^2}, \quad \lambda_n=n^{2q}\lambda_1\right) \] 
	 is the spectral couple on $[0,1]$ associated to the $n$-th spectral number.
\item\label{Sn_not}	Let $(f_n, \lambda_n)$ be the $n$-th spectral couple of the previous item. Then, 
\[
    \left(\SN _n(t)=t_0^{2-1/p} f_n(t/t_0),\quad \sn _n=t_0^{q/p-1-2q} \lambda_n\right)
\]
is the spectral couple on $\mathcal{I}=[0,t_0]$ which has $n-1$ distinct zeros in $(0,t_0)$.
In particular
\[   \SN_n(t)=\frac{t_0^{2-1/p}}{n^2}f_{1*}(nt/t_0),\qquad \text{and} \qquad 
\sn _n =n^{2q}t_0^{q/p-1-2q}\lambda_1\]  
are the $n$-th spectral function and the $n$-th spectral number on $\mathcal{I}$, respectively.
\end{enumerate}
\end{Lemma}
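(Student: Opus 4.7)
The proof decomposes naturally according to its three items, each building on what precedes.

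For item (i), my plan is to produce one eigenfunction with $n-1$ zeros in $(0,t_0)$ and then use uniqueness to pin it down under the spectral normalization. Existence comes from applying Theorem~\ref{existence} on the subinterval $[0,t_0/n]$ to get a positive solution there, and then invoking Theorem~\ref{Symmetricity Theorem} to extend this to a $2(t_0/n)$-periodic $C^1$ function on $\mathbb{R}$ still satisfying the ODE; its restriction to $[0,t_0]$ has exactly $n-1$ simple interior zeros at $t_j=jt_0/n$, in accordance with Corollary~\ref{Symmetricity Coll}. By Lemma~\ref{Unique Theorem}, the only remaining freedom is multiplication by $a>0$, which rescales the eigenvalue by $a^{p-q}$. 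The normalization $\|f''\|_{p,\mathcal{I}}=1$ fixes $|a|$ and $f'(0)>0$ kills the residual sign, singling out a unique spectral couple.

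For item (ii), I would set $\tilde{f}_n(t):=f_{1*}(nt)/n^2$ on $[0,1]$ and verify directly that this is the $n$-th spectral couple. The chain rule gives $\tilde{f}_n''(t)=f_{1*}''(nt)$, whence $\spower{\tilde{f}_n''(t)}{p}=\spower{f_{1*}''(nt)}{p}$. Two further differentiations, combined with the identity $\spower{f_{1*}(nt)}{q}=n^{2(q-1)}\spower{\tilde{f}_n(t)}{q}$, yield
\begin{equation*}
\bigl(\spower{\tilde{f}_n''}{p}\bigr)''(t)=n^{2}\lambda_1\spower{f_{1*}(nt)}{q}=n^{2q}\lambda_1\spower{\tilde{f}_n(t)}{q}.
\end{equation*}
The boundary conditions hold because $f_{1*}$ and $f_{1*}''$ both vanish at every integer, a direct consequence of Theorem~\ref{Symmetricity Theorem} applied to $f_1$. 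The normalization is checked by
\begin{equation*}
\|\tilde{f}_n''\|^p_{p,[0,1]}=\int_0^1|f_{1*}''(nt)|^p\,dt=\frac{1}{n}\int_0^n|f_{1*}''|^p=\int_0^1|f_1''|^p=1,
\end{equation*}
the key step being that $|f_{1*}''|^p$ is $1$-periodic. This follows from the antiperiodicity $f_{1*}(t+1)=-f_{1*}(t)$, which Theorem~\ref{Symmetricity Theorem} produces by combining oddness at $0$ with the symmetry $f_1(t)=f_1(1-t)$. Since $\tilde{f}_n'(0)=f_1'(0)/n>0$ and $\tilde{f}_n$ has exactly $n-1$ interior zeros at the points $t=k/n$, item (i) identifies $\tilde{f}_n$ with $f_n$ and yields $\lambda_n=n^{2q}\lambda_1$.

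For item (iii), I would apply the rescaling \eqref{subst (1)} with $b=1/t_0$ and $a=t_0^{2-1/p}$, defining $\widetilde{\SN}_n(t):=t_0^{2-1/p}f_n(t/t_0)$. A short change-of-variable calculation shows $\|\widetilde{\SN}_n''\|_{p,[0,t_0]}=1$, while $\widetilde{\SN}_n$ inherits $n-1$ interior zeros and a positive derivative at $0$ from $f_n$. The uniqueness in item (i) on $[0,t_0]$ then forces $\widetilde{\SN}_n=\SN_n$, and the eigenvalue transforms by $\tilde{\lambda}=\lambda_n a^{p-q}b^{2p}$ from \eqref{subst (1)}, which simplifies to $\sn_n=t_0^{q/p-1-2q}\lambda_n$. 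Substituting the expression for $f_n$ from item (ii) produces the closed-form identities stated. The main obstacle is purely bookkeeping: tracking the exponents $2-1/p$, $q/p-1-2q$, and $2q$ consistently through \eqref{subst (1)} and through the $L^p$-behaviour of second derivatives under dilation, with the one non-tautological input being the antiperiodicity of $f_{1*}$ used in the normalization step.
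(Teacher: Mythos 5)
Your proposal is correct and follows essentially the same route as the paper: item (i) via Theorem~\ref{existence}, Theorem~\ref{Symmetricity Theorem}, Corollary~\ref{Symmetricity Coll} and Lemma~\ref{Unique Theorem} together with the normalization, and items (ii)--(iii) via the rescaling \eqref{subst (1)}, whose ``corresponding computations'' (the paper leaves them implicit) you carry out explicitly and correctly, including the antiperiodicity of $f_{1*}$ needed for $\|\tilde f_n''\|_{p,[0,1]}=1$ and the exponent bookkeeping giving $\sn_n=t_0^{q/p-1-2q}\lambda_n$. No gaps to report.
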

\begin{proof}
 The first item follows directly from Theorem~\ref{Symmetricity Theorem} and Corollary~\ref{Symmetricity Coll}. The other statements follow from applying the substitutions \eqref{subst (1)}, then conducting the corresponding computations. 
\end{proof}

Below we will adhere dissambiguously to the notation of this lemma. The following direct consequence, is one of the main contributions of this paper. It gives an expression for the norm of the second order Sobolev embedding in terms of the eigenvalue equation \eqref{original}. 

\begin{Theorem} \label{Norma and eigenvalue}
For all $1<p,q<\infty$, the second order embedding \[E_2:W^{2,p}_{\mathrm{D}}(\mathcal{I}) \longrightarrow L^q(\mathcal{I})\] has norm
	\[\|E_2\|=\sup_{u\in W^{2,p}_{\mathrm{D}}} {\|u\|_{L^q} \over \|u''\|_{L^p}} = \|\SN_1\|_{L^q}=\sn _1^{-1/q}=|\mathcal{I}|^{1/q-1/p+2}\lambda_1^{-1/q},\]
	where $\lambda_1$ is the first spectral number on the unit interval $[0,1]$.
\end{Theorem}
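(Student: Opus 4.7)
My plan is to identify the extremal function for $E_2$ with the first spectral function $\SN_1$ (up to sign), and then read off the three equalities from the normalization conventions together with the rescaling formula in Lemma~\ref{only one in spectrum}.

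First, I would invoke the duality map computation already carried out in Section~\ref{section1}: any extremizer $u_{\mathrm{D}}$ of $E_2$, after being normalized so that $\|u_{\mathrm{D}}''\|_{p,\mathcal{I}} = 1$ and $\|u_{\mathrm{D}}\|_{q,\mathcal{I}} = \|E_2\|$, satisfies the eigenvalue equation \eqref{original} with eigenvalue $\lambda = \|E_2\|^{-q}$. After flipping sign if necessary so that $u_{\mathrm{D}}'(0) > 0$, the pair $(u_{\mathrm{D}}, \|E_2\|^{-q})$ becomes a spectral couple in the sense defined at the top of this section. Thus $\|E_2\|^{-q}$ equals $\sn_n$ for some $n \in \mathbb{N}$.

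Next I would pin down which $n$ it is. By \eqref{eigenvalue and ratio}, every spectral couple on $\mathcal{I}$ satisfies $\sn_n = \|\SN_n''\|_{p,\mathcal{I}}^p / \|\SN_n\|_{q,\mathcal{I}}^q = 1/\|\SN_n\|_{q,\mathcal{I}}^q$, so the ratio being optimized equals $\|\SN_n\|_{q,\mathcal{I}} = \sn_n^{-1/q}$ at each spectral function. The explicit scaling $\sn_n = n^{2q} t_0^{q/p-1-2q} \lambda_1$ of Lemma~\ref{only one in spectrum}(iii) shows that the sequence $\sn_n$ is strictly increasing in $n$, so the supremum is uniquely attained at $n = 1$. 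Uniqueness of the spectral couple with a prescribed number of nodes (Lemma~\ref{only one in spectrum}(i)) then forces $u_{\mathrm{D}} = \SN_1$ and $\|E_2\|^{-q} = \sn_1$.

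The three identities now follow by direct substitution. The equality $\|E_2\| = \|\SN_1\|_{q,\mathcal{I}}$ is by construction, and $\sn_1 = 1/\|\SN_1\|_{q,\mathcal{I}}^q$ gives $\|\SN_1\|_{q,\mathcal{I}} = \sn_1^{-1/q}$. Finally, substituting $\sn_1 = t_0^{q/p-1-2q} \lambda_1$ yields exponent $-(q/p - 1 - 2q)/q = 1/q - 1/p + 2$, and since $|\mathcal{I}| = t_0$ this produces $\sn_1^{-1/q} = |\mathcal{I}|^{1/q - 1/p + 2}\lambda_1^{-1/q}$. The only conceptual point that requires attention is that the supremum corresponds to the lowest mode rather than a higher one; this is the hinge of the argument and is immediate from strict monotonicity of $\sn_n$ in $n$. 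Thus no real obstacle is expected beyond invoking the uniqueness and rescaling results assembled in Lemma~\ref{only one in spectrum}.
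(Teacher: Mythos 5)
Your argument is correct and is essentially the proof the paper intends: the theorem is presented there as a direct consequence of the Section~\ref{section1} duality-map derivation (which shows the normalized extremizer is an eigenfunction of \eqref{original} with $\lambda=\|E_2\|^{-q}$, i.e.\ a spectral couple) combined with Lemma~\ref{only one in spectrum}, whose scaling law forces the extremizer to be $\SN_1$ and yields the stated identities. Your monotonicity-of-$\sn_n$ step and the exponent computation match the paper's implicit reasoning exactly.
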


It is natural now to describe the connection between the different s-numbers of $E_2$ and spectral couples. We begin by recalling the classical definitions.

\begin{Definition}
	Let  $s:T \mapsto \{s_n(T)\}\in \ell_\infty(\mathbb{N})$ be a rule which assigns to every bounded linear operator $T\in \operatorname{B}(X,Y)$ on every pair of Banach spaces $X$ and $Y$, a sequence of non-negative numbers satisfying the following properties.
	\begin{itemize}
		\item[(S1)] $\|T\|=s_1(T) \ge s_2(T) \ge ... 0$.
		\item[(S2)] $s_n(S+T) \le s_n(S)+\|T\| $ for $S,T \in \operatorname{B}(X,Y)$ and $n\in \N$.
		\item[(S3)] $s_n(BTA) \le \|B\| s_n(T)\|A\|$ whenever $A\in \operatorname{B}(X_0,X),$ $T\in \operatorname{B}(X,Y),$ $B\in \operatorname{B}(X,Y_0),$ and $n \in \N$.
		\item[(S4)] $s_n(Id:\mathbb{R}^n \to \mathbb{R}^n)=1$ for $n \in \N$.
		\item[(S5)] $s_n(T)=0$ when $\rank (T) <n$.
	\end{itemize}
We call $s_n(T)$ (or $s_n(T:X \to Y$)) an \emph{$n$-th s-number} of $T$. Moreover, when (S4) is replaced by
	\begin{itemize}
	\item[(S6)] $s_n(Id:E \to E)=1$ for every Banach space $E$ with $\dim(E)\ge n$,
		\end{itemize}
	we say that $s_n(T)$ is the $n$-th s-number of $T$ in the \emph{strict sense}. 
\end{Definition}

Many standard s-numbers of approximation theory are defined in relation to the moduli of injectivity and surjectivity, which we will recall.

\begin{Definition}
	Let $T\in \operatorname{B}(X,Y)$. The \emph{modulus of injectivity} of $T$ is
	\[j(T)= \sup\{\rho\ge 0: \|Tx\|_Y \ge \rho \|x\|_X \mbox{ for all } x\in X\}.\]
	The \emph{modulus of surjectivity} of $T$ is
	\[ q(T)= \sup\{\rho\ge 0: T(B_X) \supset \rho B_Y  \}.\]
\end{Definition}
Below we denote  the embedding of a closed linear subspace $M\subset X$ into $X$ by $J_M^X$ and the canonical map of $X$ onto the quotient space $X/ M$ by $Q_M^X$. The standard $n$-th s-numbers and their terminology is as follows.

\begin{Definition}
	Let $T\in \operatorname{B}(X,Y)$ and $n\in \N$. 
\begin{itemize}
\item The \emph{Approximation numbers} of $T$ are
	\[a_n(T)=\inf \{\|T-F\|: F\in B(X,Y),\,  \rank(F)< n \}.
	\]
\item The \emph{Isomorphism  numbers} of $T$ are
	\[i_n(T)=\sup \{\|A\|^{-1} \|B\|^{-1}\},\]
	the supremum taken over all  possible Banach spaces $G$ with $\dim(G) \ge n$ and maps $A\in B(Y,G),$ $B\in B(G,X)$ such that $ATB$ is the identity on $G$.
\item The \emph{Gelfand  numbers} of $T$ are
\[c_n(T) = \inf\{\|TJ_M^X\|: \codim(M) < n\}.\]
\item The \emph{Bernstein  numbers} of $T$ are
\[b_n(T)=\sup\{j(TJ_M^X): \dim(M)\ge n  \}.  \]
\item The \emph{Kolmogorov numbers} of $T$ are
\[d_n(T)=\inf \{\|Q_N^YT\|: \dim(N) < n \} .\]
\item The \emph{Mityagin  numbers} of $T$ are
\[m_n(T)= \sup\{q(Q_N^YT): \codim(N)\ge n \}.\]
\end{itemize}
\end{Definition}

In the context of this definition, recall the fundamental relation \cite{EL-book1,Pie01}
\begin{equation} \label{s-numbers relations}
	\begin{aligned} 
	a_n(T) \ge &  \max[c_n(T),d_n(T)] \ge \min[c_n(T), d_n(T)]  \\
	 \ge & \max[b_n(T),m_n(T)] \ge \min[b_n(T),m_n(T)] \ge i_n(T).
\end{aligned}
\end{equation}
Moreover, the approximation numbers are the largest s-numbers and the isomorphism numbers are the smallest strict s-numbers.

\begin{Theorem} \label{Th from BeTi2}
	For all $1<p,q<\infty$, the second order embedding \[E_2:W^{2,p}_{\mathrm{D}}(\mathcal{I}) \longrightarrow L^q(\mathcal{I})\] has s-numbers obeying the following relations. 
\begin{enumerate}[i)] \item \label{s-num1} If $p<q$, then
	\[i_n(E_2)\geq \sn_n^{-1/q}={|\mathcal{I}|^{1/q+2-1/p}\over n^2 \lambda_1^{1/q}}.
	\]
\item \label{s-num2} If $p \ge q$, then
	\[a_n(E_2)\leq \sn_n^{-1/q}= {|\mathcal{I}|^{1/q+2-1/p}\over n^2 \lambda_1^{1/q}}.
	\]
\end{enumerate}
\end{Theorem}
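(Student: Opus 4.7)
The plan is to handle the two parts separately, and in both cases to reduce to Theorem~\ref{Norma and eigenvalue} applied on each subinterval $I_j = [(j-1)|\mathcal{I}|/n, j|\mathcal{I}|/n]$ of length $h = |\mathcal{I}|/n$, where one has $\|E_2|_{I_j}\| = h^{1/q+2-1/p}\lambda_1^{-1/q}$.

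For part (ii), where $p \geq q$, I would construct an explicit rank-$(n-1)$ approximation $F$. Define $Fu = \phi$ as the continuous piecewise linear interpolant of $u$ at the nodes $t_j = j|\mathcal{I}|/n$, with $\phi(0) = \phi(|\mathcal{I}|) = 0$. The point is that $v := u - \phi$ vanishes at every $t_j$, so $v|_{I_j} \in W^{2,p}_{\mathrm{D}}(I_j)$, and because $\phi$ is piecewise affine one has $(Fu)'' \equiv 0$, hence $v''|_{I_j} = u''|_{I_j}$. Theorem~\ref{Norma and eigenvalue} on $I_j$ then yields $\|v\|_{q,I_j} \leq h^{1/q+2-1/p}\lambda_1^{-1/q}\|u''\|_{p,I_j}$. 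Raising to the $q$-th power, summing over $j$, and applying the concavity of $x \mapsto x^{q/p}$ (valid precisely because $q \leq p$) to get $\sum_j \|u''\|_{p,I_j}^q \leq n^{1-q/p}\|u''\|_p^q$, a short calculation of exponents gives $\|v\|_q \leq \sn_n^{-1/q}\|u''\|_p$, so that $a_n(E_2) \leq \|E_2 - F\| \leq \sn_n^{-1/q}$.

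For part (i), where $p < q$, I would try to build an $n$-dimensional isomorphism realising $\sn_n^{-1/q}$. Take $G = \ell_q^n$ and, morally, $B(e_j) = \phi_j$ where $\phi_j$ is the first spectral function on $I_j$ normalized by $\|\phi_j''\|_{p,I_j} = 1$, extended by zero to $\mathcal{I}$, while $A$ is the disjoint-support $L^q$-projection sending $u \mapsto \bigl(\|\phi_j\|_q^{-q}\int_{I_j} u\,|\phi_j|^{q-2}\phi_j\bigr)_j$. Theorem~\ref{Norma and eigenvalue} on $I_j$ then gives $\|\phi_j\|_q = n^{1/p-1/q}\sn_n^{-1/q}$, from which the disjoint-support computations produce $\|A\| \leq n^{1/q-1/p}\sn_n^{1/q}$ and, formally, $\|B\| \leq n^{1/p-1/q}$, so that $\|A\|\|B\| \leq \sn_n^{1/q}$ and $AE_2B = \mathrm{Id}_G$. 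The main obstacle is that the zero-extension $\phi_j$ does \emph{not} lie in $W^{2,p}_{\mathrm{D}}(\mathcal{I})$: $\phi_j$ is only Lipschitz at the nodes, since $\phi_j'$ has a nonzero jump at $t_{j-1}$ and $t_j$, putting Dirac masses into its distributional second derivative. Any literal computation of $\|B(a)\|_{W^{2,p}_{\mathrm{D}}}$ as $\|\sum a_j \phi_j''\|_p$ is therefore illegitimate, and the core of the argument must be a choice of $B$ that either replaces each $\phi_j$ by a suitable $W^{2,p}_{\mathrm{D}}(\mathcal{I})$-valued surrogate producing the same $L^p$-disjointness of the second derivatives without introducing delta contributions, or else factors the construction through the isometric identification $u \leftrightarrow u''$ that realises $W^{2,p}_{\mathrm{D}}(\mathcal{I})$ as $L^p(\mathcal{I})$, pushing the disjointness requirements onto the $L^p$ side where zero-extended bumps are legitimate. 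Negotiating this identification while preserving the exact constant $\sn_n^{1/q}$ is the step I would expect to be the hardest.
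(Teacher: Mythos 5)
Your part (ii) is correct and is essentially the paper's own argument. The paper likewise takes the rank-$(n-1)$ piecewise-linear interpolation operator at the equally spaced nodes, observes that $u-Tu$ restricted to each $\mathcal{I}_i$ lies in $W^{2,p}_{\mathrm{D}}(\mathcal{I}_i)$ with unchanged second derivative, applies Theorem~\ref{Norma and eigenvalue} on each subinterval of length $|\mathcal{I}|/n$, and finishes with the $\ell^q$-versus-$\ell^p$ comparison \eqref{lemma p>q}, which is exactly your concavity step; the exponent bookkeeping leading to $\sn_n^{-1/q}$ is the same.

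Part (i) is where there is a genuine gap: as written, you do not prove $i_n(E_2)\ge \sn_n^{-1/q}$. You set up $G=\ell_q^n$ and maps $A$, $B$ with the right constants, correctly note that the zero-extended first spectral functions $\phi_j$ of the subintervals do not belong to $W^{2,p}_{\mathrm{D}}(\mathcal{I})$ (their first derivatives jump at the nodes because the interior zeros are simple, by Corollary~\ref{Symmetricity Coll}, so Dirac masses enter the distributional second derivative), and then stop, offering two possible repairs without carrying out either. That missing step is the entire content of part (i): one must exhibit a legitimate factorization $AE_2B=\mathrm{Id}_G$, i.e.\ an $n$-dimensional family inside $W^{2,p}_{\mathrm{D}}(\mathcal{I})$ on which $\|u\|_{q,\mathcal{I}}/\|u''\|_{p,\mathcal{I}}\ge \sn_n^{-1/q}$, and note that the obvious $\varepsilon$-smoothings spoil the constant, since any $W^{2,p}$ function supported in a strict subinterval must also have vanishing derivative at the endpoints of its support, which is a different extremal problem. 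For comparison, the paper's proof is precisely the construction you describe: it takes $g_i=\SN_n\chi_{\mathcal{I}_i}$, which by Theorem~\ref{Norma and eigenvalue} applied on $\mathcal{I}_i$ are multiples of your $\phi_i$, sets $M_n=\operatorname{span}\{g_1,\dots,g_n\}$, lets $B(u)=u$ be the inclusion of $M_n$ into $W^{2,p}_{\mathrm{D}}$ and $A$ the norm-one disjoint-support projection of $L^q$ onto $M_n$, and computes $\inf_{0\neq u\in M_n}\|u\|_{q,\mathcal{I}}/\|u''\|_{p,\mathcal{I}}=n^{1/q-1/p}\mu^{-1/q}=\sn_n^{-1/q}$ via \eqref{lemma p<q} and Lemma~\ref{only one in spectrum}, with $\|u''\|_{p,\mathcal{I}}$ evaluated subinterval by subinterval; the membership question you raise at the nodes is not discussed there. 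So your diagnosis pinpoints exactly the step the paper treats tersely, but to have a proof you must either adopt and justify that piecewise reading of $\|u''\|_p$ on $M_n$ or actually execute one of the repairs you only name; at present part (i) remains unproved in your proposal.
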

\begin{proof}
Fix $1<p \le q < \infty$. 

\underline{Proof of i)}. We show that 
\begin{equation*}
i_n(E_2)\ge \sn_n^{-1/q}.
\end{equation*}

Set $0=a_0<a_1< ... < a_n=t_0$ where $a_i-a_{i-1}=t_0/n$ and $\mathcal{I}_i=(a_{i-1},a_i)$. Then, $\SN_n(a_i) = 0=\SN''_n(a_i)$ for $i=0, ... , n$. Let $g_i=\SN_n \chi_{\mathcal{I}_i}$ for $1\le i \le n$. Then  
$M_n=\operatorname{span}\{g_1, ... , g_n\}$ has dimension $n$. Also, $\|g''_i\|_{L^p}=\|g''_1\|_{L^p}$ for every $1\le i \le n$. 

Now, let
$\mu= \sn_1$ for $\mathcal{I}_1$. By virtue of Theorem~\ref{Norma and eigenvalue} applied on the interval $\mathcal{I}_i$,
\[
    \SN_1(t)=\frac{\operatorname{sgn}(g_i'(0))}{\|g_i''\|_{L^p}}g_i(t) 
\]
for all $t\in \mathcal{I}_i$.
Hence,
\[
     \|g_i\|_{L^q}^q=\mu^{-1}\|g_1''\|_{L^p}^q 
\]
for all $i\in\{1, \ldots, n\}$.

Recall the next identity valid for $p\leq q$, \cite[Lemma 8.14]{EL-book1},
\begin{equation} \label{lemma p<q}
	\inf_{\underline{\alpha} \in \R^n} \frac{\left( \sum_{i=1}^n |\alpha_i|^q \right)^{1/q}}
{\left( \sum_{i=1}^n |\alpha_i|^p \right)^{1/p}}=n^{1/q-1/p},
\end{equation}
and the infimum is attained when $|\alpha_i|=c$ , $i=1,...,n$.  
Since the supports of the $g_i$ are disjoint, we have,
\[\begin{aligned}
 \inf_{0\not=u\in M_n} \frac{\|u\|_{L^q}}{\|u''\|_{L^p}}
= & \inf_{\underline{\alpha}\in \R^n\setminus \{0\}} \frac{\|\sum_{i=1}^{n}\alpha_i g_i\|_{L^q}}{\|\sum_{i=1}^{n} \alpha_i g''_i\|_{L^p}} = \inf_{\underline{\alpha}\in \R^n\setminus \{0\}} \frac{(\sum_{i=1}^{n} \|\alpha_i g_i\|^q_{L^q})^{1/q}}{(\sum_{i=1}^{n} \| \alpha_i g''_i\|^p_{L^p})^{1/p}}\\
= & \inf_{\underline{\alpha}\in \R^n\setminus \{0\}} \frac{(\sum_{i=1}^{n} \|\alpha_i g''_1\|^q_{L^p} \mu^{-1})^{1/q}}{(\sum_{i=1}^{n} \| \alpha_i g''_1\|^p_{L^p})^{1/p}} = \inf_{\underline{\alpha}\in \R^n\setminus \{0\}} \frac{(\sum_{i=1}^{n} |\alpha_i|^q )^{1/q}}{(\sum_{i=1}^{n} | \alpha_i|^p)^{1/p}} \mu^{-1/q} \\ = & n^{1/q-1/p} \mu^{-1/q}= \sn_n^{-1/q}.
\end{aligned} 
\]
The last equality follows from Lemma~\ref{only one in spectrum}.

This allows us to complete the proof of i) as follows. In the definition of the isomorphism numbers, replace $G=M_n$ with norm $\|\cdot\|_{L^q}$, $X=W^{2,p}_{\operatorname{D}}$, $Y=L^q$ and $T=E_2$. Now set $B:G\longrightarrow X$ the operator $B(u)=u$ and $A:Y\longrightarrow G$ the projection obtained by completing $\{g_i\}$ to a basis of $L^q$. Then $\|A\|=1$ and 
\[
    \|B\|^{-1}=\left(\sup_{0\not=u\in M_n}\frac{\|u''\|_{L^p}}{\|u\|_{L^q}}\right)^{-1}= \inf_{0\not=u\in M_n} \frac{\|u\|_{L^q}}{\|u''\|_{L^p}} =\sn_n^{-1/q}.
\]
So,  indeed $i_n(E_2)\geq \sn_n^{-1/q}$.

\underline{Proof of ii)}. 
Now we claim that for $q \le p$, 
\begin{equation} \label{stp2} 
a_n(E_2)\le \sn_n^{-1/q}.
\end{equation}
Using the partition of the interval into $n$ sub-intervals as in the previous part, we now let
\[
(T_{i}u)(x)=
\chi_{\mathcal{I}_i}(x)\left(u(a_{i-1})+\frac{u(a_i)-u(a_{i-1})}{a_i-a_{i-1}}(x-a_{i-1})\right)
\]
and $T=\sum_{i=1}^nT_i$. Note that $T$ defined on $W^{2,p}_{\mathrm{D}}(\mathcal{I})$ is an operator of rank $n-1$. Then,
\[
    a_n(E_2)\leq \sup_{0\not=f\in W^{2,p}_{\mathrm{D}}} \frac{\|f-Tf\|_{L^q}}{\|f''\|_{L^p}}\leq \sup_{0\not=u\in W^{2,p}} \frac{\|u-Tu\|_{L^q}}{\|u''\|_{L^p}}.
\]
Now, for any $0\not=u\in W^{2,p}$,
\[
    \|u-Tu\|_{L^q}=\left( \sum_{i=1}^n \|u-T_i u\|^q_{L^q(\mathcal{I}_i)}\right)^{1/q}.
\]
Also, for all $x\in \mathcal{I}_i$, $u''(x)=(u-Tu)''(x)$. Hence,
\[
    \|u''\|_{L^p}=\left( \sum_{i=1}^n \|(u-T_i u)''\|^p_{L^p(\mathcal{I}_i)}\right)^{1/p}.
\]

Let $u_i=\chi_{\mathcal{I}_i} u$. Then, for any $u_i\in W^{2,p}(\mathcal{I}_i)$ we have $(u_i-T_iu_i)(a_i)=(u_i-T_iu_i)(a_{i-1})=0$ and $(u_i-T_iu_i)''=u_i''$. Thus,
\begin{align*}
   a_n(E_2)&\leq \sup_{u\in W^{2,p}} \frac{\left( \sum_{i=1}^n \|u_i-T_i u_i\|^q_{L^q}\right)^{1/q}}{\left( \sum_{i=1}^n \|u_i''\|^p_{L^p}\right)^{1/p}}\\
&\leq \sup_{\substack{u_i\in W^{2,p}(\mathcal{I}_i) \\ i=1,\ldots,n}} \frac{\left( \sum_{i=1}^n \|u_i-T_i u_i\|^q_{L^q}\right)^{1/q}}{\left( \sum_{i=1}^n \|u_i''\|^p_{L^p}\right)^{1/p}} \\
&= \sup_{\substack{f_i\in W^{2,p}_{\mathrm{D}}(\mathcal{I}_i) \\ i=1,\ldots,n}} \frac{\left( \sum_{i=1}^n \|f_i\|^q_{L^q}\right)^{1/q}}{\left( \sum_{i=1}^n \|f_i''\|^p_{L^p}\right)^{1/p}}.
\end{align*}

By virtue of Theorem~\ref{Norma and eigenvalue} applied on the sub-segments $\mathcal{I}_i$, which are of equal length, we then have
\begin{align*} 
\sup_{\substack{f_i\in W^{2,p}_{\mathrm{D}}(\mathcal{I}_i) \\ i=1,\ldots,n}} & \frac{\left( \sum_{i=1}^n \|f_i\|^q_{L^q}\right)^{1/q}}{\left( \sum_{i=1}^n \|f_i''\|^p_{L^p}\right)^{1/p}} \\ &\leq \lambda_1^{-1/q} |\mathcal{I}_1|^{2+1/q-1/p}\sup_{\substack{f_i\in W^{2,p}_{\mathrm{D}}(\mathcal{I}_i) \\ i=1,\ldots,n}} \frac{\left( \sum_{i=1}^n \|f_i''\|^q_{L^p}\right)^{1/q}}{\left( \sum_{i=1}^n \|f_i''\|^p_{L^p}\right)^{1/p}}\\ &={|\mathcal{I}|^{1/q+2-1/p}\over n^{2+1/q-1/p} \lambda_1^{1/q}}\sup_{\underline{\alpha} \in \R^n} \frac{\left( \sum_{i=1}^n |\alpha_i|^q \right)^{1/q}}
{\left( \sum_{i=1}^n |\alpha_i|^p \right)^{1/p}},
\end{align*}
where $|\alpha_i|=\|f_i''\|_{L^p}$.

Invoke now the identity \cite[Lemma 8.23]{EL-book1}, valid for $q\leq p$,
\begin{equation} \label{lemma p>q} \sup_{\underline{\alpha} \in \R^n} \frac{\left( \sum_{i=1}^n |\alpha_i|^q \right)^{1/q}}
{\left( \sum_{i=1}^n |\alpha_i|^p \right)^{1/p}}=n^{1/q-1/p},
\end{equation}
where the supremum being attained when $|\alpha_i|=c$ , $i=1,...,n$.
Hence, we finally get
\[
    a_n(E_2)\leq {|\mathcal{I}|^{1/q+2-1/p}\over n^{2} \lambda_1^{1/q}}=
     \sn_n^{-1/q}.
\]
as claimed.
\end{proof}

The second statement in the previous theorem sharpens the results obtained in \cite{B1} and \cite{BeTi}. 

The next statement encompasses the other original main purpose of this paper, but we omit details of its proof, which is beyond the current scope. It follows from Lemma~\ref{only one in spectrum} and Theorem~\ref{Th from BeTi2}, alongside with results settled in \cite{B1} and  \cite[``Basic Theorem'']{BeTi}. An interesting related question, is whether we can replace $d_n(E_2)$ by $c_n(E_2)$ and $b_n(E_2)$ by $m_n(E_2)$ below.  Our omission of the proof is motivated by the fact that we aim at reporting on a full investigation of this question in future.

\begin{Theorem} \label{Lemma from BeTi1}
Let $1<p,q<\infty$.
\begin{itemize}
\item[i)]	If $p\le q$, then
	\[i_n(E_2)=b_n(E_2)=\sn_n^{-1/q}={|\mathcal{I}|^{1/q+2-1/p}\over n^2 \lambda_1^{1/q}}.
	\]
\item[ii)]	If $q \le p$, then
	\[a_n(E_2)=d_n (E_2)=\sn_n^{-1/q}= {|\mathcal{I}|^{1/q+2-1/p}\over n^2 \lambda_1^{1/q}}.
	\]
\end{itemize}
\end{Theorem}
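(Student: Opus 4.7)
My approach is to sandwich each of the four quantities between the bounds already provided by Theorem~\ref{Th from BeTi2} and the universal chain \eqref{s-numbers relations}, and to close the two remaining gaps using the techniques of \cite{B1} and the Basic Theorem of \cite{BeTi}. The key structural fact that makes the whole scheme work is the explicit description of the spectral couples in Lemma~\ref{only one in spectrum}: every $\SN_n$ is obtained from $\SN_1$ by rescaling, so that each sub-interval in a uniform partition carries a faithful copy of the one-dimensional spectral geometry.

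For the case $p\le q$, the first item of Theorem~\ref{Th from BeTi2} and \eqref{s-numbers relations} give $\sn_n^{-1/q}\le i_n(E_2)\le b_n(E_2)$, so what is pending is the upper bound $b_n(E_2)\le \sn_n^{-1/q}$. My plan is: given any $n$-dimensional $M\subset W^{2,p}_{\mathrm{D}}(\mathcal{I})$, use that the point-evaluation map $u\mapsto (u(j|\mathcal{I}|/n))_{j=1}^{n-1}\in\mathbb{R}^{n-1}$ has non-trivial kernel on $M$ to extract a non-zero $u_\star\in M$ vanishing at every interior partition point. Such a $u_\star$ restricts on each $\mathcal{I}_i=[(i-1)|\mathcal{I}|/n,i|\mathcal{I}|/n]$ to an element of $W^{2,p}_{\mathrm{D}}(\mathcal{I}_i)$, so the local version of Theorem~\ref{Norma and eigenvalue} controls $\|u_\star\|_{q,\mathcal{I}_i}$ by $\|u_\star''\|_{p,\mathcal{I}_i}$, and a summation using \eqref{lemma p<q} in its saturating $\ell_p$–$\ell_q$ form yields the global bound with the sharp constant $\sn_n^{-1/q}$.

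For the case $q\le p$, the second item of Theorem~\ref{Th from BeTi2} and \eqref{s-numbers relations} give $d_n(E_2)\le a_n(E_2)\le \sn_n^{-1/q}$, so only $d_n(E_2)\ge \sn_n^{-1/q}$ is missing. I would reuse the $n$-dimensional subspace $M_n=\operatorname{span}\{g_1,\dots,g_n\}$ with $g_i=\SN_n\chi_{\mathcal{I}_i}$ from the proof of Theorem~\ref{Th from BeTi2}~i). For any $(n-1)$-dimensional $N\subset L^q(\mathcal{I})$, a dimension count inside $M_n$ produces a non-zero $u_\star\in M_n$ whose class in $L^q(\mathcal{I})/N$ is bounded away from zero, and the disjoint supports of the $g_i$, combined with the identity \eqref{lemma p>q} applied in the $\ell_q$–$\ell_p$ direction, convert this positivity into the precise lower estimate $\inf_{v\in N}\|u_\star-v\|_{q,\mathcal{I}}\ge \sn_n^{-1/q}\,\|u_\star''\|_{p,\mathcal{I}}$.

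The principal obstacle in both directions is the sharpness of the constant: the naive partition arguments recover the correct value only when the selected $u_\star$ restricts on each sub-interval to a scalar multiple of the rescaled first spectral function with a common amplitude across the partition, i.e.\ exactly when the relevant $\ell_p$–$\ell_q$ inequality is saturated. Showing that such a canonical choice can always be made is precisely the content of the Basic Theorem of \cite{BeTi} and the complementary arguments of \cite{B1}, which exploit the strict monotonicity of the underlying integral operator (here a fourth-order analogue of the Hardy operator, whose spectral geometry is dictated by Lemma~\ref{only one in spectrum}). Once the two matching bounds are installed, the sandwich \eqref{s-numbers relations} immediately collapses $i_n$ and $b_n$ (resp.\ $a_n$ and $d_n$) to the common value $\sn_n^{-1/q}$, completing the proof.
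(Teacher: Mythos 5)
The paper does not prove Theorem~\ref{Lemma from BeTi1}: the authors state explicitly that the argument ``is beyond the current scope'' and defer to \cite{B1} and \cite[Basic Theorem]{BeTi}, reporting only that the result \emph{follows from} those references together with Lemma~\ref{only one in spectrum} and Theorem~\ref{Th from BeTi2}. So there is no in-paper proof to match your sketch against, and your proposal is therefore correctly identifying the intended ingredients rather than reproducing a written argument. That said, the mechanism you describe does not actually close the gaps it needs to close, and the gap is quantitative, not merely a matter of invoking a black box at the end.

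Concretely: in part i) you must show $b_n(E_2)\le\sn_n^{-1/q}$, i.e.\ that every $n$-dimensional $M\subset W^{2,p}_{\mathrm D}$ contains some $u_\star$ with $\|u_\star\|_q/\|u_\star''\|_p\le\sn_n^{-1/q}$. Choosing $u_\star$ to vanish at the $n-1$ equally spaced interior points and applying Theorem~\ref{Norma and eigenvalue} on each $\mathcal{I}_i$ gives $\|u_\star\|_{q,\mathcal{I}_i}\le\mu^{-1/q}\|u_\star''\|_{p,\mathcal{I}_i}$ with $\mu$ the first spectral number of the subinterval. Summing and using $\|\beta\|_{\ell_q}\le\|\beta\|_{\ell_p}$ (the only inequality available without further information on $\beta_i=\|u_\star''\|_{p,\mathcal{I}_i}$) yields $\|u_\star\|_q/\|u_\star''\|_p\le\mu^{-1/q}=n^{1/p-1/q}\sn_n^{-1/q}$, which for $p\le q$ is $\ge\sn_n^{-1/q}$: the bound is off by exactly $n^{1/p-1/q}$. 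The infimum identity \eqref{lemma p<q} cannot be ``applied in its saturating form'' because you have no control over the distribution of the $\beta_i$; saturation would require all local second-derivative energies of $u_\star$ to coincide, which a mere dimension count over a \emph{fixed} uniform partition does not provide. The analogous defect occurs in part ii): the Kolmogorov $(n-1)$-width of the dilated $\ell_p^n$-ball inside $\ell_q^n$ is $1$ when $q\le p$, so the fixed subspace $M_n$ gives only $d_n(E_2)\ge\mu^{-1/q}=n^{1/p-1/q}\sn_n^{-1/q}\le\sn_n^{-1/q}$, again short by the same factor. What the Buslaev--Tikhomirov Basic Theorem actually supplies is an \emph{adapted}, competitor-dependent partition, constructed via a topological degree/Borsuk-type argument, that forces the local energies to equalize; it replaces your dimension count rather than supplementing it. As written, your two ``missing inequalities'' are not installed, so the final sandwich does not collapse, and the sketch remains a pointer to the literature rather than a proof.
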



\section*{Acknowledgments} This research was funded by The UK's Royal Society International Exchange Grant ``James Orthogonality and Higher order Sobolev Embeddings''. We are also grateful to our colleagues at the Czech Technical University in Prague for hosting many of the discussions that eventually lead to this paper. 

\def\bstname{mn}

\end{document}